\title{Enumeration of double cosets and self-inverse double cosets}
\numberwithin{equation}{section}
\newtheorem{thm}{Theorem}[section]
\newtheorem{prop}[thm]{Proposition}
\newtheorem{defi}[thm]{Definition}
\newtheorem{cnj}[thm]{Conjecture}
\newcommand{\dps}{\displaystyle}
\newcommand{\Sq}{\operatorname{Sq}}
\newcommand{\GL}{\operatorname{GL}}
\newcommand{\Ss}{\mathfrak S}
\newcommand{\F}{\mathbb F}
\newcommand{\cube}{B}
\newcommand{\octa}{\tilde B}
\newcommand{\diag}{\Delta}
\newcommand{\ctype}{\mathbf t}
\newcommand{\selfinv}{\Theta}
\newcommand{\ncl}{N}
\newcommand{\Poly}{\operatorname{Poly}}
\newcommand{\Cyc}{\operatorname{Cyc}}
\title{On the enumeration of double cosets and self-inverse double cosets}
\author{Ludovic Schwob}
\begin{document}
	\npthousandsep{\hspace{0.5mm}}

\begin{abstract}
	Double cosets appear in many contexts in combinatorics, for example in the enumeration of certain objects up to symmetries. Double cosets in a quotient of the form $H\backslash G / H$  have an inverse, and can be their own inverse.  In this paper we present various formulas enumerating double cosets, and in particular self-inverse double cosets. We study double cosets in classical groups, especially the symmetric groups and the general linear groups, explaining how to obtain the informations on their conjugacy classes required to apply our formulas. We also consider double cosets of parabolic subgroups of Coxeter groups of type B.
\end{abstract}

\maketitle

\tableofcontents
\section{Introduction}
Cosets are fundamental objects in group theory, and their enumeration is easy since all cosets of a subgroup have the same size. A group $G$ with subgroups $H$ and $K$ can also be partitioned into double cosets: $H\backslash G/K=\{HgK:g\in G\}$. Double cosets are much less studied than ordinary cosets, and their enumeration is more complicated as all double cosets do not have the same size.
 Nevertheless, recent papers have shown that double cosets are very interesting objects in algebraic combinatorics, as for example parabolic double cosets in Coxeter groups~\cite{BKPST17,DS22}. They also appear in Hecke algebras to enumerate maps in non-orientable surfaces~\cite{GJ96}, and have many connections with representation theory~\cite{Fr41}. Many combinatorial problems can also be reformulated in terms of double cosets~\cite{BGR14,LYWL20}.
 
Double cosets in a quotient of the form $H\backslash G/H$ have an inverse, some double cosets being their own inverse. Our first result is a formula for $|\selfinv^G_H|$, the number of self-inverse double cosets in $H\backslash G /H$.

\begin{thm}
	Let $C_1,...,C_r$ be the conjugacy classes of a finite group $G$, and $H$ a subgroup of $G$. We have 
	\begin{equation}
		|\selfinv^G_H|=\frac1{|H|}\sum_{i=1}^r|H\cap C_i|\cdot \Sq(C_i),
	\end{equation} where $\Sq(C_i)$ is the number of square roots of any element of $C_i$.
\end{thm}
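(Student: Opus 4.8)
The plan is to realize self-inverse double cosets as the fixed points of an involution and then evaluate that count with Burnside's lemma. First I would enlarge the usual two-sided action: let $H\times H$ act on $G$ by $(h_1,h_2)\cdot g = h_1 g h_2^{-1}$, whose orbits are exactly the double cosets, and adjoin the inversion map $\tau\colon g\mapsto g^{-1}$. A direct check shows $\tau(h_1,h_2)\tau^{-1}=(h_2,h_1)$, so these assemble into a semidirect product $\Gamma=(H\times H)\rtimes\langle\tau\rangle$ of order $2|H|^2$ acting on $G$. The $\Gamma$-orbit of $g$ is $HgH\cup Hg^{-1}H$, so it is a single double coset precisely when $HgH$ is self-inverse, and otherwise fuses the pair $\{HgH,Hg^{-1}H\}$. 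Writing $N$ for the total number of double cosets and $N_s=|\selfinv^G_H|$, the number of $\Gamma$-orbits is therefore $(N+N_s)/2$.

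Next I would apply Burnside's lemma to $\Gamma$ and split the averaging sum over the ``even'' part $H\times H$ and the ``odd'' part $(H\times H)\tau$. The even part is exactly Burnside's count for $H\times H$ and contributes $N$; matching this against $(N+N_s)/2$ makes $N$ cancel and leaves
\[
N_s=\frac1{|H|^2}\sum_{(h_1,h_2)\in H\times H}\#\{g\in G: g=h_1 g^{-1}h_2^{-1}\}.
\]
The crux is then to simplify the fixed-point set of an odd element, and here the key manipulation is the substitution $w=gh_2$: the fixed-point equation $g=h_1g^{-1}h_2^{-1}$ becomes equivalent to $w^2=h_1h_2$. I expect this change of variables to be the main (and essentially the only) nonroutine step, since it is what converts a two-sided inversion constraint into a clean squaring condition.

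Finally I would count the resulting triples. For fixed $w\in G$ and $h_2\in H$ the element $h_1=w^2h_2^{-1}$ is forced, and lies in $H$ if and only if $w^2\in H$; hence the odd sum equals $|H|\cdot\#\{w\in G:w^2\in H\}$ and
\[
N_s=\frac1{|H|}\,\#\{w\in G:w^2\in H\}=\frac1{|H|}\sum_{h\in H}\Sq(h).
\]
To close, I would observe that $\Sq$ is a class function of $G$ — conjugation by $x$ carries square roots of $h$ bijectively to square roots of $xhx^{-1}$ — so grouping the elements of $H$ according to the $G$-conjugacy class $C_i$ containing them turns $\sum_{h\in H}\Sq(h)$ into $\sum_i|H\cap C_i|\cdot\Sq(C_i)$, which is the asserted formula. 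As sanity checks, $H=\{1\}$ recovers the count of elements satisfying $g^2=1$ (involutions together with the identity), and $H=G$ yields $N_s=1$.
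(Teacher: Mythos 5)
Your proof is correct, and it takes a genuinely different route from the paper. The paper does not prove the key intermediate identity from scratch: it starts from Frame's character-theoretic formula $|\selfinv^G_H|=\frac1{|G|}\sum_{g\in G}\chi(g^2)$ (cited, not reproved), interprets $\chi(g^2)$ as the number of cosets in $G/H$ fixed by $g^2$, and then manipulates the double sum over $G$ and $H$ using the orbit-counting identity for conjugacy classes to reach $|\selfinv^G_H|=\frac1{|H|}\,|\{g\in G:g^2\in H\}|$, after which it groups elements of $H$ by conjugacy class exactly as you do. You reach the same intermediate identity by a self-contained Burnside argument: extending the $(H\times H)$-action by the inversion $\tau$ to get $\Gamma=(H\times H)\rtimes\langle\tau\rangle$, observing that the $\Gamma$-orbit count is $(N+N_s)/2$, cancelling the even part against the ordinary double-coset count, and converting the twisted fixed-point equation $g=h_1g^{-1}h_2^{-1}$ into $w^2=h_1h_2$ via $w=gh_2$. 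All the individual steps check out (the conjugation relation $\tau(h_1,h_2)\tau^{-1}=(h_2,h_1)$, the orbit count, the bijectivity of the substitution, and the final regrouping using that $\Sq$ is a class function). What your approach buys is independence from Frame's formula and from any character theory whatsoever — it is elementary and could serve readers who do not want the representation-theoretic background; what the paper's approach buys is the explicit link to the permutation character of $G$ acting on $G/H$, which is what ties Theorem 2.1 to the multiplicity formula $\sum_i c_i\mu_i^H$ and to the Frobenius--Schur-type identities used elsewhere in the paper. Incidentally, the paper remarks (without details) on yet a third route: showing that a self-inverse double coset contains exactly $|H|$ elements whose squares lie in $H$ while a non-self-inverse one contains none; your Burnside computation can be read as a global version of that local statement.
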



Though some enumeration formulas were already known for self-inverse double cosets, the formule above can be more convenient for computations.
As they are needed in Theorem \ref{thm:2} and Formula (\ref{eq:4}) to compute the number of (self-inverse) double cosets, we give formulas for computing $|C_i|$, $\Sq(C_i)$ and $|H\cap C_i|$, in particular when $G$ is a symmetric group or a general linear group, making use of cycle indexes of their subgroups.

We then apply our formulas to various examples, including quotients of the symmetric groups by cyclic or dihedral subgroups, which can be interpreted as cycles or polygons up to symmetry. We also consider higher dimensional analogs, which can be seen as permutations of the vertices of a polytope up to symmetry.
Applying our formulas to double cosets of particular subgroups of the general linear group formed by permutation matrices, we can enumerate matrices in $\GL_n(\F_q)$ up to permutation of rows and columns. We also explain how to compute $|\GL_\lambda(\F_q)\backslash\GL_n(\F_q)/\GL_\mu(\F_q)|$, and conjecture the following:
\begin{cnj}
	For $\lambda,\mu\vdash n$, $|\GL_\lambda(\F_q)\backslash\GL_n(\F_q)/\GL_\mu(\F_q)|$ is a monic polynomial in $q$ with positive integer coefficients.
\end{cnj}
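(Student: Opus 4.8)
\emph{Proof strategy (proposal).} The natural starting point is the two-sided analogue of Theorem 1: writing the double cosets as the orbits of $\GL_\lambda(\F_q)\times\GL_\mu(\F_q)$ acting on $\GL_n(\F_q)$ by $(h,k)\cdot g=hgk^{-1}$ and applying Burnside's lemma, one obtains the Frobenius formula
\begin{equation}
|\GL_\lambda(\F_q)\backslash\GL_n(\F_q)/\GL_\mu(\F_q)|=\frac{|\GL_n(\F_q)|}{|\GL_\lambda(\F_q)|\,|\GL_\mu(\F_q)|}\sum_{i}\frac{|\GL_\lambda(\F_q)\cap C_i|\cdot|\GL_\mu(\F_q)\cap C_i|}{|C_i|},
\end{equation}
the sum being over the conjugacy classes $C_i$ of $\GL_n(\F_q)$. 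Every quantity on the right is, by the cycle-index techniques developed above, an explicit expression in $q$: the orders $|\GL_n|,|\GL_\lambda|,|\GL_\mu|$ are products of factors $q^a-q^b$, the class sizes $|C_i|$ and centralizer orders are the standard rational functions of $q$ attached to the elementary-divisor type of $C_i$, and the intersections $|\GL_\lambda\cap C_i|$ factor through the cycle index of the Levi subgroup $\GL_\lambda=\prod_i\GL_{\lambda_i}$. So the whole expression is an \emph{a priori} rational function of $q$.

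First I would prove polynomiality. Collecting the terms above into the cycle-index generating function, the denominators coming from $1/|C_i|$ and from the centralizer orders are exactly cancelled by the numerator factors $|\GL_\lambda\cap C_i|$ and by $|\GL_n|/(|\GL_\lambda|\,|\GL_\mu|)$; this is the cancellation that makes the number of all double cosets an integer for each prime power $q$, promoted to an identity of rational functions. Concretely I would show that the relevant generating function is a product over irreducible polynomials whose local factors are visibly polynomial in $q$, so that each coefficient is a rational function of $q$; taking nonnegative integer values at infinitely many prime powers then forces it to be a genuine polynomial with integer coefficients.

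Monicity I expect to follow from a geometric reading of the left-hand side: the double cosets are the $\GL_n(\F_q)$-orbits on pairs consisting of a direct-sum decomposition of type $\lambda$ and one of type $\mu$, equivalently the orbits of block row and block column operations on matrices. The generic relative position of the two decompositions forms a single irreducible family, rational over $\F_q$ and of some dimension $d$, contributing $q^d$ to the count with coefficient $1$, while all degenerate positions lie in strata of dimension $<d$; this pins down the degree and the leading coefficient.

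The positivity of the intermediate coefficients is the point I expect to be genuinely hard, and it is not reducible to a stratum-by-stratum argument. Already for $\lambda=\mu=(1,1)$ one finds $|T\backslash\GL_2(\F_q)/T|=q+4$, where $T$ is the diagonal torus and the dense stratum alone contributes $q-2$, with a negative coefficient; positivity is recovered only after summing over all supports. Thus positivity is a global feature of the full sum, and establishing it will require more than the bare orbit count: the most promising routes are either a manifestly positive combinatorial formula for the polynomial---for instance through the Hall algebra of $\F_q$-modules or an $\F_q$-analogue of RSK adapted to pairs of decompositions---or an affine paving of a suitable resolution whose cells biject with the double cosets. Producing such a positive model uniformly in $\lambda$ and $\mu$ is the main obstacle.
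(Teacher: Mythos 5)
First, a point of order: the statement you set out to prove is Conjecture \ref{cnj:1}, which the paper itself does \emph{not} prove --- the author only verifies it computationally up to $n=8$, using the sum-over-types formula of the theorem preceding it to evaluate $|\GL_\lambda(\F_q)\backslash\GL_n(\F_q)/\GL_\mu(\F_q)|$ for each fixed $q$. So there is no proof in the paper to compare against; the only question is whether your proposal closes the conjecture, and by your own account it does not. The positivity of the coefficients, which you correctly identify as ``the main obstacle'' and for which you offer only candidate frameworks (Hall algebras, an $\F_q$-analogue of RSK, affine pavings), is precisely the content of the conjecture. Your computation $|T\backslash\GL_2(\F_q)/T|=q+4$ is correct (it agrees with Theorem \ref{thm:1} at $n=2$), and your observation that the dense stratum alone contributes $q-2$ is a genuinely useful warning that no stratum-by-stratum argument can settle positivity; but a proposal whose essential step is an acknowledged open problem is a research plan, not a proof.

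Even the parts you treat as routine have gaps. First, in your Burnside formula the sum runs over the conjugacy classes of $\GL_n(\F_q)$, and this indexing set itself varies with $q$ (as does the set $\Phi_q$ of irreducible polynomials in your proposed product formula); to regard the count as a single rational function of $q$ you must first group classes by type, using that $|C_i|$ and $|\GL_\lambda\cap C_i|$ depend only on the type, that the number of classes of a fixed type is the explicit polynomial $\ncl_\ctype$ of Equation \eqref{eq:11}, and that the intersection numbers $a_{\ctype,\lambda}$ are themselves polynomials in $q$ --- this last point is asserted nowhere in the paper (which works at fixed $q$) and is not addressed in your sketch. Second, the inference ``a rational function taking nonnegative integer values at infinitely many prime powers is a polynomial with integer coefficients'' is false as stated: $q(q-1)/2$ takes integer values at every integer but does not have integer coefficients. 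That argument can only yield a polynomial with rational coefficients; integrality of the coefficients is, like positivity and like the monicity you argue only heuristically via a dense ``generic position'' family, a further claim that your outline does not establish.
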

Considering double cosets of parabolic subgroups in the symmetric groups, we found formulas for sums over Kostka numbers, without having to compute the Kostka numbers themselves. These results can also be used to enumerate different kinds of semistandard Young tableaux and matrices, for they are related by the Robinson-Schensted-Knuth correspondence.
We also consider parabolic double cosets of the Coxeter groups of type B, which can be used to enumerate certain kinds of centrally symmetric matrices, or equivalently semistandard Young tableaux invariant under the Schützenberger involution.
\section{Formulas for the enumeration of double cosets}
Let $G$ be a finite group, and $H$ and $K$ two subgroups of $G$. The group $G$ is partitioned into double cosets $H\backslash G/K=\{HgK | g\in G\}$. The group $G$ acts on the cosets $g_iH\in G/H$ by left multiplication, which gives a representation $R_H$ of the group $G$. The representation $R_H$ is the representation of $G$ induced by the trivial representation of $H$, and can be decomposed into irreducible representations $R_i$ with multiplicities $\mu_i^H$ and characters $\chi_i$:
\begin{equation}
	R_H=\bigoplus_{i=1}^r\mu_i^HR_i.
\end{equation}
It is known that~\cite[Exercise 7.77a]{Fr41}
\begin{equation}
	|H\backslash G/K|=\sum_{i=1}^r\mu_i^H\mu_i^K.
	\label{eq:9}
\end{equation}
In $H\backslash G/H$, inverses of elements of a double coset $HgH$ belong to the same double coset $Hg^{-1}H$, so we say $Hg^{-1}H$ is the inverse of $HgH$. The number of self-inverse double cosets in $H\backslash G/H$ is equal to (see \cite{Fr41}):
\begin{equation}
	\selfinv^G_H=\sum_{i=1}^rc_i\mu_i^H,
	\label{eq:1}
\end{equation} where $c_i=1$ if $R_i$ has a symmetric bilinear invariant ($R_i$ is a real representation), $c_i=-1$ if $R_i$ has an alternating bilinear invariant  ($R_i$ is not real but its character is real), and $c_i=0$ otherwise. These coefficients can be expressed as 
\begin{equation}
	c_i=\frac1{|G|}\sum_{g\in G}\chi_i(g^2).
\end{equation}
They also appear when writing the number of square roots of elements of $G$ in terms of characters. Let $\Sq$ be the map that sends each element $g$ of $G$ to its number of square roots $\Sq(g):=|\{x\in G:x^2=g\}|$, we have (see~\cite{Is76}, pages 49--58):
\begin{equation}
	\Sq = \sum_{i=1}^rc_i\chi_i.
	\label{eq:8}
\end{equation}

To prove Formula (\ref{eq:1}), Frame~\cite{Fr41} used the fact that the number of self-inverse double cosets in $H\backslash G/H$ is equal to 
\begin{equation}
	|\selfinv^G_H|=\frac 1{|G|}\sum_{g\in G}\chi(g^2),
	\label{eq:2}
\end{equation} where $\chi(g^2)$ is the trace of $g^2$ seen as a permutation matrix over $G/H$. It turns out that this formula can be written as a sum over $H$ instead of $G$, which can be quicker to compute when the conjugacy classes of $G$ are well understood.

\begin{thm}
	Let $C_1,...,C_r$ be the conjugacy classes of $G$. We have 
	\begin{equation}
		|\selfinv^G_H|=\frac1{|H|}\sum_{i=1}^r|H\cap C_i|\cdot \Sq(C_i),
	\end{equation} where $\Sq(C_i)$ is the number of square roots of any element of $C_i$.
	\label{thm:2}
\end{thm}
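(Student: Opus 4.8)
The plan is to start from Frame's formula (\ref{eq:2}), namely $|\selfinv^G_H|=\frac1{|G|}\sum_{g\in G}\chi(g^2)$, where $\chi(g^2)$ counts the fixed points of the permutation induced by $g^2$ on the cosets $G/H$. The key observation is that the character $\chi$ of the permutation representation $R_H$ on $G/H$ is a class function, so $\chi=\sum_i\mu_i^H\chi_i$ by (2.2), and combining this with (\ref{eq:8}) should let me recognize $\sum_{g}\chi(g^2)$ in terms of the square-root counting map $\Sq$. More concretely, I would first expand $\chi(g^2)=\sum_i\mu_i^H\chi_i(g^2)$, swap the order of summation, and use $c_i=\frac1{|G|}\sum_{g\in G}\chi_i(g^2)$ to obtain $|\selfinv^G_H|=\sum_i c_i\mu_i^H$, recovering (\ref{eq:1}); but the real goal is to rewrite this as a sum over $H$.

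For that rewriting, the main idea I would pursue is a direct combinatorial interpretation of $\sum_{g\in G}\chi(g^2)$. Since $\chi(g^2)$ is the number of cosets $xH\in G/H$ fixed by $g^2$, i.e.\ the number of $xH$ with $g^2xH=xH$, equivalently $x^{-1}g^2x\in H$, I would write
\begin{equation}
	\sum_{g\in G}\chi(g^2)=\sum_{g\in G}\,\#\{xH\in G/H:x^{-1}g^2x\in H\}=\frac1{|H|}\sum_{g\in G}\#\{x\in G:x^{-1}g^2x\in H\},
\end{equation}
where the last equality uses that $x^{-1}g^2x\in H$ depends only on the coset $xH$ and each coset has $|H|$ representatives. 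Then I would substitute $y=g^2$ and group the sum over $g$ according to the value $y=g^2$, so that for each target element $y$ the number of $g$ with $g^2=y$ is exactly $\Sq(y)$. This yields
\begin{equation}
	\sum_{g\in G}\chi(g^2)=\frac1{|H|}\sum_{y\in G}\Sq(y)\cdot\#\{x\in G:x^{-1}yx\in H\}.
\end{equation}

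Next I would reorganize the double sum by conjugacy classes. Both $\Sq(y)$ and the quantity $\#\{x\in G:x^{-1}yx\in H\}$ are class functions of $y$: the former because conjugate elements have the same number of square roots, and the latter because conjugating $y$ merely reparametrizes $x$. So I would fix a class $C_i$, pick a representative $y\in C_i$, and compute $\sum_{y\in C_i}\#\{x:x^{-1}yx\in H\}=|C_i|\cdot\#\{x:x^{-1}yx\in H\}$. A standard orbit-counting argument then identifies this with $|G|\cdot|H\cap C_i|/|C_i|$: indeed the pairs $(x,h)$ with $x^{-1}yx=h\in H\cap C_i$ are counted on one side by summing the stabilizer-sized fibers of the conjugation map $x\mapsto x^{-1}yx$ onto $C_i$, and on the other by $|H\cap C_i|$ times that common fiber size $|G|/|C_i|$. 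Assembling everything, the factors of $|C_i|$ cancel and I obtain $\sum_{g\in G}\chi(g^2)=\frac{|G|}{|H|}\sum_i|H\cap C_i|\cdot\Sq(C_i)$, and dividing by $|G|$ via (\ref{eq:2}) gives the claimed formula.

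The step I expect to be the main obstacle is the bookkeeping in this last orbit-counting identity: tracking the factors of $|H|$, $|C_i|$, and the fiber size $|G|/|C_i|$ correctly, and justifying cleanly that the count of $x\in G$ with $x^{-1}yx\in H$ equals $|G|\cdot|H\cap C_i|/|C_i|$ for $y\in C_i$. The cleanest way to handle this will be to count the set $\{(x,h)\in G\times H: x^{-1}yx=h\}$ in two ways, using the orbit--stabilizer theorem for the conjugation action, which makes all the cancellations transparent and avoids any ad hoc manipulation.
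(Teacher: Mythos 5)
Your proof is correct and follows essentially the same route as the paper: starting from Frame's fixed-point formula \eqref{eq:2}, lifting the count of fixed cosets to a count of elements $x\in G$ with $x^{-1}g^2x\in H$, and evaluating the resulting double sum via the orbit--stabilizer fact that conjugation fibers have size $|G|/|C_i|$. The only difference is bookkeeping order --- you group the $g$-sum by the value $y=g^2$ and then by conjugacy classes, whereas the paper first sums over $h\in H$ and passes through the intermediate identity $|\selfinv^G_H|=\frac1{|H|}\sum_{h\in H}\Sq(h)$ --- but the underlying double count is identical.
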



\begin{proof}
	We write (\ref{eq:2}) as 
	\begin{align}
		\begin{split}
					|\selfinv^G_H|&=\frac 1{|G|}\sum_{g\in G}|\{K\in G/H:g^2K=K\}|\\
			&=\frac 1{|G|}\sum_{g\in G}\frac 1{|H|}|\{x\in G:xg^2x^{-1}\in  H\}|\\
			& =\frac 1{|G||H|}\sum_{h\in H}\sum_{g\in G}|\{x\in G:xg^2x^{-1}=h\}|.
		\end{split}
	\end{align}
	
	The quantity $|\{x\in G:xg^2x^{-1}=h\}|$ is equal to $\frac {|G|}{|C_i|}$ if $g^2$ and $h$ belong to the same conjugacy class $C_i$ of $G$, and $0$ otherwise. We get 
	\begin{equation}
		\sum_{g\in G}|\{x\in G:xg^2x^{-1}=h\}|=\frac {|G|}{|C_i|}|\{g\in G:g^2\in C_i\}|=|G|\cdot |\{g\in G:g^2=h\}|,
	\end{equation}
	hence 
	\begin{equation}
		|\selfinv^G_H|=\frac1{|H|}\sum_{h\in H}|\{g\in G:g^2=h\}|.
		\label{eq:7}
	\end{equation}
	This equation can then be simplified into a sum over the conjugacy classes $(C_i)_i$ of $G$.
\end{proof}
It is worth noticing that we can also obtain Formula \eqref{eq:7} by proving that self-inverse double cosets contain $|H|$ elements whose squares are in $H$ (some of which are involutions), while other double cosets contain none. It also implies that self-inverse doubles cosets are the double cosets containing elements of $G$ whose squares are in $H$, as well as the double cosets containing involutions.

We can also show directly that Equation \eqref{eq:1} and Theorem \ref{thm:2} are equivalent. By Equation \eqref{eq:8}, we have 
\begin{equation}
	\frac1{|H|}\sum_i|H\cap C_i|\cdot \Sq(C_i)=\frac1{|H|}\sum_i|H\cap C_i|\sum_j c_j\chi_j(C_i)=\frac1{|H|}\sum_jc_j\sum_{h\in H}\chi_j(h).
\end{equation}

Since $\dps \mu_j^H=\frac 1{|H|}\sum_{h\in H}\chi_j(h)$, we have indeed $\dps \frac1{|H|}\sum_i|H\cap C_i|\cdot \Sq(C_i)=\sum_ic_i\mu_i^H$.

We will also need a formula to compute the number of $(H_1,H_2)$-double cosets of $G$, where $H_1,H_2<G$. This can be obtained using Burnside's lemma for the action of $H_1\times H_2$ on $G$:
\begin{equation}
	|H_1\backslash G/H_2| =\frac1{|H_1|\cdot |H_2|}\sum_{(h_1,h_2)\in H_1\times H_2}|\{ g:g = h_1gh_2\}|.
\end{equation}
 To compute the number of elements $g\in G$ fixed by $(h_1,h_2)\in G$, see that $g = h_1gh_2$ only if $h_1$ and $h_2$ belong to the same conjugacy class $C_i$, and in this case the number of $g\in G$ such that $g=h_1gh_2$ is equal to $|G|/|C_i|$. We can then rewrite the number of double cosets as a sum over the conjugacy classes of $G$:
\begin{equation}
	|H_1\backslash G/H_2|=\frac1{|H_1|\cdot |H_2|}\sum_{i}\frac{|G|}{|C_i|}\cdot |H_1\cap C_i|\cdot |H_2\cap C_i|.
	\label{eq:4}
\end{equation}

\section{Computing $|C_i|$, $\Sq(C_i)$, and $|H\cap C_i|$}
In this section we explain how to compute the quantities $|C_i|$, $\Sq(C_i)$, and $|H\cap C_i|$ for certain groups and subgroups, as we need them for Theorem \ref{thm:2} and Formula (\ref{eq:4}). We usually compute the size of a conjugacy class $C_i$ from the size of its centralizer, since it is equal to $|G|/|C_i|$. 

In a conjugacy class, all elements have the same number of square roots, and all their squares belong to the same conjugacy class. Therefore, $\Sq(C_i)$ is equal to the sum of the sizes of conjugacy classes whose squares are in $C_i$, divided by the size of $C_i$:
\begin{equation}
	\Sq(C_i) = \frac1{|C_i|}\sum_{C_j^2=C_i}|C_j|.
	\label{eq:12}
\end{equation}
The values of $|H\cap C_i|$, with $H<G$, will often be given by cycle indices, whose definition depends on the group we consider. These cycle indices are designed so that if $H_1<G_1$ and $H_2<G_2$, the cycle index of $H_1\times H_2$ is the product of the cycle indices of $H_1$ and $H_2$.

\subsection{The case of the symmetric group $\Ss_n$}
Consider $G=\Ss_n$. The conjugacy class of a permutation in $G$ depends only on its cycle type. Let $\lambda=(\lambda_1,...,\lambda_k)$ be a partition of $n$. We denote by $C_\lambda$ the conjugacy class of permutations of cycle type $\lambda$. For all $k\ge 1$, let $m_k$ be the multiplicity of $k$ in $\lambda$ so that $\lambda$ can be written as $[1^{m_1}2^{m_2}...]$. 
Using Equation (\ref{eq:12}) and the fact that the squares of elements of a conjugacy class $C_\lambda$ belong to $C_{\lambda^2}$, where $\lambda^2$ is the partition obtained from $\lambda$ by splitting each of its even parts into halves, we get:
\begin{equation}
	\Sq(C_\lambda)=\begin{cases}
		0 \qquad \text{ if there exists }k\ge 1\text{ such that }m_{2k} \text{ is odd;}\\
		\dps \prod_{k \text{ odd}}\left(\sum_{i=0}^{\lfloor m_k/2\rfloor}\frac{m_k!}{i!(m_k-2i)!}\left(\frac k2\right)^i\right)\prod_{k \ge 1}\frac{m_{2k}!~k^{m_{2k}/2}}{(m_{2k}/2)!} \quad \text{ otherwise.}
	\end{cases}
\end{equation}
We used this formula to compute the sum of the character tables of the symmetric groups~\cite[A082733]{oeis}, without having to compute every character value. 
Since all characters of the symmetric group are characters of real representations, all $c_i$ in Equation (\ref{eq:8}) are equal to $1$, hence $$ \sum_{\lambda,\mu\vdash n}\chi_\mu(C_\lambda) = \sum_{\lambda\vdash n}\Sq(C_\lambda).$$ Since $\Sq(C_\lambda)=\prod_{k\ge 1}\Sq(C_{[k^{m_k}]})$,
its generating function  can be written as 
\begin{align}
	\begin{split}
		\sum_{n\ge 0}x^n\sum_{\lambda,\mu\vdash n}\chi_\mu(C_\lambda) &= \prod_{k\ge 0} \sum_{m\ge 0}x^{km}\Sq(C_{[k^m]})
		\\
		&= \prod_{k\ge 0}\left(\sum_{m\ge 0}x^{4km}\frac{(2m)!k^m}{m!}\right)\prod_{k\ge 0}\left(\sum_{m\ge 0}\sum_{i=0}^{\lfloor m/2\rfloor}\frac{x^{(2k+1)m}m!}{i!(m-2i)!}\left(k+\frac 12\right)^i\right).
	\end{split}
\end{align}
In a recent paper~\cite{ADP24}, Ayyer, Dey and Paul discovered independently similar formulas to compute the sum  of character tables of generalized symmetric groups.\\

For certain subgroups $H<\Ss_n$, we give formulas for $|H\cap C_\lambda|$, which can be written nicely using the cycle index of $H$ acting on $G$ by conjugation, which is 
\begin{equation}
	Z_H(x_1,x_2,...)=\frac 1{|H|}\sum_{h\in H}\prod_{k\ge 1}x_k^{j_k(h)}=\frac 1{|H|}\sum_{\lambda\vdash n}|H\cap C_\lambda|\cdot x_\lambda,
\end{equation}
where $j_k(h)$ is the number of cycles of length $k$ in the permutation of $G$ corresponding to $h$, and $x_\lambda=x_{\lambda_1}\cdots x_{\lambda_r}$ where $\lambda=(\lambda_1,...,\lambda_r)$.
If $H_1<\Ss_n$ and $H_2<\Ss_m$, their product $H_1\times H_2$ is a subgroup of $\Ss_n\times \Ss_m<\Ss_{n+m}$, and its cycle index as a subgroup of $\Ss_{n+m}$ is equal to:
\begin{equation}
	Z_{H_1\times H_2}(x_1,x_2,...) = Z_{H_1}(x_1,x_2,...)\cdot Z_{H_2}(x_1,x_2,...).
\end{equation}

When $H$ is the cyclic group $Z_n$, we have 
\begin{equation}
	\dps Z_{Z_n}(x_1,x_2,...)=\frac 1n\sum_{d|n}\varphi(d)x_d^{n/d},
	\label{eq:13}
\end{equation} where $\varphi$ is the Euler totient function. When $H$ is the dihedral group $D_n$, we have 
\begin{equation}
	Z_{D_n}(x_1,x_2,...)=\begin{cases}\dps \frac 12x_1x_2^{(n-1)/2}+\frac1{2n}\sum_{d|n}\varphi(d)x_d^{n/d} &\text{ if } n \text{ is odd;}  \\
		\dps \frac 14\left(x_1^2x_2^{n/2-1}+x_2^{n/2}\right)+\frac1{2n}\sum_{d|n}\varphi(d)x_d^{n/d}&\text{ if } n \text{ is even.}\end{cases}
	\label{eq:14}
\end{equation}

Let $\lambda=(\lambda_1,...,\lambda_r)\vdash n$. We define as usual $\dps z_\lambda:=\frac{|G|}{|C_\lambda|}=\prod_{k\ge 1}k^{m_k}m_k!$
and $\dps \Ss_\lambda:=\prod_{i=1}^r\Ss_{\lambda_i}$, which is a subgroup of $\Ss_n$, and we have
\begin{equation}
	Z_{\Ss_\lambda}(x_1,x_2,...)=\prod_{i=1}^r\left(\sum_{\nu\vdash \lambda_i}\frac{x_\nu}{z_\nu}\right).
	\label{eq:3}
\end{equation}

\subsection{The case of the general linear group $\GL_n(\F_q)$}
Consider now $G=\GL_n(\F_q)$. Let $q$ be a prime power, $\Phi_q\subset \F_q[X]$ be the set of irreducible monic polynomials over $\F_q$, and $\mathcal P$ the set of integer partitions. Let $\mathcal F_{n,q}$ the set of mappings $f:\Phi_q\to \mathcal P$ such that 
\begin{equation}
	\sum_{\phi\in \Phi_q}\deg(\phi)\cdot |f(\phi)|=n.
\end{equation}

Conjugacy classes in $M_n(\F_q)$  are in bijection  with mappings in $\mathcal F_{n,q}$~\cite[Section IV.2]{Mac98}.
while conjugacy classes in $\GL_n(\F_q)$ correspond to mappings $f$ such that $f(X)$ is the empty partition. To see that, let us consider the $\F_q[X]$-module on $\F_q^n$ in which $X$ acts as a given matrix $M\in M_n(\F_q)$. This module is isomorphic to a direct sum~\cite[ Thm. 12.1.6]{DF91}
\begin{equation}
	\bigoplus_{\phi\in \Phi_q}\bigoplus_{j=1}^{\ell_\phi} \F_q[X]/(\phi^{\lambda_{\phi,j}})
\end{equation}
with $\lambda_{\phi,1}\ge \lambda_{\phi,2}\ge ... \ge \lambda_{\phi,\ell_\phi}$ for all $\phi \in \Phi_q$.
Two matrices are conjugate if and only if they define isomorphic modules, \emph{i.e.}, for each $\phi\in \Phi_q$, the corresponding partitions $(\lambda_{\phi,1},...,\lambda_{\phi,\ell_\phi})$ are the same.

This property is very useful for enumeration in $M_n(\F_q)$ (see~\cite{Mor06} for examples). It is a generalization of Jordan reduction: Jordan blocks $J_{k_1}(\alpha),...,J_{k_r}(\alpha)$ correspond to the polynomial $X-\alpha$ being mapped to the partition $(k_1,...,k_r)$.
We denote by $C_f$ the conjugacy class corresponding to a mapping $f$.

Let $\lambda$ be a partition of $n$, let $m_k$ be its number of parts equal to $k$ and $d_k=m_1+2m_2+...+km_k+k(m_{k+1}+...+m_n)$. We define
\begin{equation}
	c_d(\lambda):=\prod_{k\ge 1}\prod_{i=1}^{m_k}\left(q^{d_k\cdot d}-q^{(d_k-i)\cdot d}\right).
\end{equation}

We then have the equality~\cite[Lemma 2, p. 146]{Ku81}
\begin{equation}
	z_f:=\frac{|G|}{|C_f|}=\prod_{\phi\in \Phi_q}c_{\deg \phi}(f(\phi)).
	\label{eq:10}
\end{equation}

For enumerative purposes, we distinguish conjugacy classes according to their types.
\begin{defi}
	Let $C_f$ be a conjugacy class of $\GL_n(\F_q)$ corresponding to the map $f$ sending each $\phi \in \Phi_q$ to a partition $\lambda_\phi$. The \emph{type} of $C_f$ is the sequence of multisets $$\ctype(C_f):=(\{\lambda_\phi:\deg(\phi)=d\})_{d\ge 1}.$$
\end{defi}
We can see that the size of $C_f$ only depends on its type (\emph{cf.} Equation (\ref{eq:10})). We can also compute the number of conjugacy classes of a given type in $\GL_n(\F_q)$.
\begin{prop}
	Let $\ctype$ the type of a conjugacy class in $\GL_n(\F_q)$, \emph{i.e.}, a sequence of multisets of integer partitions $(\ctype_d)_{d\ge 1}$ such that $\sum_{d\ge 1} d\cdot \sum_{\lambda \in \ctype_d}|\lambda|=n$. For all $d\ge 1$ and $\lambda \in \mathcal P$, let $m_{d,\lambda}$ be the number of partitions equal to $\lambda$ in $\ctype_d$. The number of distinct conjugacy classes of type $\ctype$ in $\GL_n(\F_q)$ is equal to
	\begin{equation}
		\ncl_\ctype=\prod_{d\ge 1}\binom{I_d(q)}{|\ctype_d|}\frac{|\ctype_d|!}{\prod_{\lambda}m_{d,\lambda}!},
		\label{eq:11}
	\end{equation}
	where $I_d(q)$ is the number of monic irreducible polynomials of degree $d$ in $\F_q[X]$ which are distinct from $X$; we have $I_1(q)=q-1$ and $I_d(q)=\frac 1d\sum_{k|d}(-1)^{\mu(d/k)}q^k$ for $d>1$.
\end{prop}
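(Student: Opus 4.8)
The plan is to count conjugacy classes through the parametrization recalled above: a conjugacy class of $\GL_n(\F_q)$ is the same datum as a map $f\colon \Phi_q\to \mathcal P$ with $f(X)=\emptyset$ and $\sum_{\phi}\deg(\phi)\,|f(\phi)|=n$. Fixing a target type $\ctype=(\ctype_d)_{d\ge 1}$, I would enumerate the maps $f$ whose type equals $\ctype$. The crucial observation is that the value of $f$ on a polynomial $\phi$ of degree $d$ contributes only to the multiset $\ctype_d$, so the condition ``$f$ has type $\ctype$'' splits into independent conditions, one for each degree $d$. Hence $\ncl_\ctype$ factors as a product over $d\ge 1$ of the number of ways to realize $\ctype_d$ on the degree-$d$ polynomials, and it suffices to treat a single degree.

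Fix a degree $d$. The polynomials $\phi$ with $\deg\phi=d$ and $\phi\ne X$ are exactly the $I_d(q)$ available slots; the polynomial $X$ is excluded because $f(X)=\emptyset$ is forced. Since $\ctype_d$ is, by definition, the multiset of \emph{nonempty} partitions occurring among $\{f(\phi):\deg\phi=d\}$, specifying an admissible $f$ in degree $d$ amounts to (i) choosing which of the $I_d(q)$ polynomials receive a nonempty partition---there must be exactly $|\ctype_d|$ of them---and (ii) assigning to those chosen polynomials the partitions of the multiset $\ctype_d$, each polynomial receiving exactly one partition, the remaining polynomials then automatically receiving $\emptyset$.

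Step (i) contributes $\binom{I_d(q)}{|\ctype_d|}$ choices. For step (ii), distributing the multiset $\ctype_d$---which contains each partition $\lambda$ with multiplicity $m_{d,\lambda}$---over the $|\ctype_d|$ distinguishable chosen polynomials, one partition each, is the standard multiset-permutation count, equal to the multinomial coefficient $\frac{|\ctype_d|!}{\prod_\lambda m_{d,\lambda}!}$. Multiplying the two factors and then taking the product over all $d\ge 1$ yields Equation (\ref{eq:11}).

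I expect the only genuine subtlety to be the bookkeeping around the empty partition: because infinitely many polynomials are sent to $\emptyset$, the type must record only the nonempty partitions, and one must verify that this convention makes the reconstruction of $f$ from $\ctype$ a genuine bijection onto the admissible maps of the given type---so that the two factors in each degree neither over- nor under-count, and so that the product formula is exact. Once this is checked, everything else is the elementary combinatorics of placing a labelled multiset into distinguishable boxes.
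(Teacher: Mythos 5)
Your proof is correct and follows essentially the same route as the paper's: identify conjugacy classes of type $\ctype$ with mappings $f\colon\Phi_q\to\mathcal P$ supported away from $X$ whose degree-$d$ values form the multiset $\ctype_d$, then count these degree by degree using the $I_d(q)$ available irreducible polynomials. In fact you make explicit the binomial and multinomial factors (and the bookkeeping for the empty partition) that the paper's one-line proof leaves implicit, so your write-up is, if anything, more complete.
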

\begin{proof}
	Conjugacy classes of type $\ctype $ in $\GL_n(\F_q)$ are in bijection with mappings $f:\Phi_d\to \mathcal P$ such that $f(X)=\emptyset$, and for all $d\ge 1$ the multiset $\{f(\phi):\phi\in \Phi_d,\deg(\phi) = d\}$ is equal to $\ctype_d$. It is well-known that $|\Phi_d|$ is equal to the necklace polynomial $\frac 1d\sum_{k|d}(-1)^{\mu(d/k)}q^k$.
\end{proof}

There does not seem to be an explicit formula for $\Sq(C_f)$. To compute it, we use the following method:
\begin{enumerate}
	\item Generate all conjugacy classes in $\GL_n(\F_q)$ as mappings $f:\Phi_q\to \mathcal P$;
	\item For each conjugacy class, find the conjugacy class containing the squares of its elements;
	\item $\Sq(C_f)$ is then the sum of the sizes of conjugacy classes whose squares are in $C_f$, divided by the size of $C_f$.
\end{enumerate}

Steps (1) and (3) are straightforward. Concerning step (2), we now explain how to compute the square of a conjugacy class using only the corresponding mapping. In the following, for all $\phi \in \Phi_q$ we define $\phi_2$ as the polynomial whose roots are the squares of the roots of $\phi$, which is equal to $\phi(\sqrt X)\cdot \phi(-\sqrt X)$.
\begin{prop}
	Let $f:\Phi_q\to \mathcal P$ corresponding to the conjugacy class $C_f\subset \GL_n(\F_q)$. Let $f^2$ be the mapping corresponding to $C_f^2$, \emph{i.e.}, the conjugacy class containing the squares of the elements of $C_f$. If $f$ maps $\phi$ to a partition $\lambda=(\lambda_1,...,\lambda_i)$, then $f^2$ maps each irreducible factor of $\phi_2$ to $\lambda$ if $q$ is odd and to $(\lfloor (\lambda_1+1)/2\rfloor,\lfloor \lambda_1/2\rfloor,...,\lfloor (\lambda_i+1)/2\rfloor,\lfloor \lambda_i/2\rfloor)$ if $q$ is even.
	\label{pr:1}
\end{prop}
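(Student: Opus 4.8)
The plan is to read Proposition~\ref{pr:1} module-theoretically and reduce it to the Jordan type of a single block. Fix a matrix $M\in C_f$ and let $V=\F_q^n$ be the $\F_q[X]$-module on which $X$ acts as $M$; then $C_f^2$ is the conjugacy class of $M^2$, i.e.\ the isomorphism type of $V$ viewed as an $\F_q[Y]$-module with $Y$ acting as $M^2$. This is the restriction of scalars of $V$ along the inclusion $\F_q[Y]\hookrightarrow\F_q[X]$, $Y\mapsto X^2$, which commutes with direct sums. By the primary decomposition recalled above, $V$ is a direct sum of cyclic pieces $\F_q[X]/(\phi^m)$ with $m$ ranging over the parts of $\lambda=f(\phi)$, so it suffices to decompose each such piece as an $\F_q[Y]$-module and then take the union (as multisets) of the resulting partitions over the parts of $\lambda$.

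First I would pass to the algebraic closure. Since $\phi$ is separable, with distinct roots $\alpha_1,\dots,\alpha_d$ in $\overline{\F}_q$, the Chinese remainder theorem gives $\overline{\F}_q[X]/(\phi^m)\cong\prod_{k=1}^d\overline{\F}_q[X]/((X-\alpha_k)^m)$, the $k$-th factor being the module of a single Jordan block $J_m(\alpha_k)$. Everything then reduces to the Jordan type of $J_m(\alpha)^2$ for $\alpha\neq 0$. Writing $J_m(\alpha)=\alpha I+N$ with $N$ the nilpotent shift,
\begin{equation*}
  J_m(\alpha)^2=\alpha^2 I+2\alpha N+N^2 .
\end{equation*}
If $q$ is odd then $2\alpha\neq 0$, so $2\alpha I+N$ is invertible and commutes with $N$; hence the nilpotent part $N(2\alpha I+N)$ has the same Jordan type as $N$, and $J_m(\alpha)^2$ is conjugate to $J_m(\alpha^2)$. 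If $q$ is even the cross term vanishes and $J_m(\alpha)^2=\alpha^2 I+N^2$, where the square of a single size-$m$ nilpotent block splits into two blocks of sizes $\lceil m/2\rceil$ and $\lfloor m/2\rfloor$; thus $J_m(\alpha)^2$ is conjugate to $J_{\lceil m/2\rceil}(\alpha^2)\oplus J_{\lfloor m/2\rfloor}(\alpha^2)$, and $\lceil m/2\rceil=\lfloor(m+1)/2\rfloor$.

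It remains to descend to $\F_q$ and to recognise the factors. Over $\overline{\F}_q$ the eigenvalues of $M^2$ on this piece are the squares $\alpha_k^2$, which are exactly the roots of $\phi_2=\phi(\sqrt X)\phi(-\sqrt X)$ by its very definition. The $\F_q[Y]$-module type is recovered by grouping these eigenvalues into Galois orbits, which are precisely the irreducible factors of $\phi_2$, and assigning to each factor the Galois-invariant partition read off the Jordan blocks at any one of its roots. When $q$ is even, squaring is the Frobenius automorphism, hence injective and Galois-equivariant, so $\phi_2$ is squarefree and each irreducible factor inherits the partition $(\lceil m/2\rceil,\lfloor m/2\rfloor)$; summing over the parts of $\lambda$ gives the stated partition $(\lfloor(\lambda_1+1)/2\rfloor,\lfloor\lambda_1/2\rfloor,\dots)$. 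When $q$ is odd each block contributes $J_m(\alpha_k^2)$, so each root of $\phi_2$ carries a single part $m$, and summing over the parts of $\lambda$ assigns $\lambda$ to each irreducible factor of $\phi_2$.

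The point that needs the most care is the bookkeeping of multiplicities in odd characteristic, since $\phi_2$ need not be squarefree: the multiplicity of $\alpha_k^2$ as a root of $\phi_2$ equals the number of roots of $\phi$ squaring to it, which is $2$ exactly when both $\alpha$ and $-\alpha$ are roots of $\phi$. A short argument shows that for an irreducible $\phi\neq X$ this happens for all roots simultaneously and only when $\phi$ is even, in which case $\phi=\eta(X^2)$ with $\eta$ irreducible and $\phi_2=\eta^2$; otherwise $\phi_2$ is irreducible. Hence $\phi_2$ is always either irreducible or the square of an irreducible, and in the even case the single factor $\eta$ must receive $\lambda$ once for each of its two occurrences in $\phi_2$ (equivalently, $\lambda$ with multiplicity two), which is exactly what keeps the $\F_q$-dimension equal to $\deg(\eta)\cdot 2|\lambda|=|\lambda|\deg(\phi)$. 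The separability of $\phi$, the CRT splitting, and the Jordan type of $N^2$ are routine, so the genuine content is the characteristic-dependent computation of $J_m(\alpha)^2$ together with this Galois–multiplicity descent.
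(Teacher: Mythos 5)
Your proof is correct, and its computational core is the same as the paper's: pass to $\overline{\F}_q$, reduce to squaring a single Jordan block, and observe that the superdiagonal term $2\alpha$ survives exactly when $q$ is odd, yielding $J_m(\alpha^2)$ in odd characteristic and $J_{\lceil m/2\rceil}(\alpha^2)\oplus J_{\lfloor m/2\rfloor}(\alpha^2)$ in characteristic $2$. Two differences are worth noting. First, where the paper justifies the odd case by displaying the squared block and invoking elementary operations, you write $J_m(\alpha)^2=\alpha^2 I+N(2\alpha I+N)$ and use that a nilpotent matrix times a commuting invertible one has the same Jordan type; this is a cleaner justification of the same lemma. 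Second, and more substantially, the paper's proof stops at the block computation: it never carries out the descent to $\F_q$, i.e.\ it does not verify that the Galois orbits of the squared eigenvalues are precisely the irreducible factors of $\phi_2$, and it does not track multiplicities when $\phi_2$ fails to be squarefree. Your final paragraph supplies exactly this missing bookkeeping: for irreducible $\phi\neq X$ in odd characteristic, either squaring is injective on the roots of $\phi$ and $\phi_2$ is irreducible (each factor receives $\lambda$), or the root set of $\phi$ is closed under negation, $\phi$ is even, $\phi_2=\eta^2$, and $\eta$ receives $\lambda$ once per occurrence, i.e.\ the partition $\lambda\cup\lambda$ --- which is how the statement must be read (factors counted with multiplicity), consistent with the grouping convention the paper only states informally after the proposition and with the doubling $(2,1)\mapsto(2,2,1,1)$ visible in its $\F_3$ example. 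Likewise, in characteristic $2$ your remark that squaring is a field automorphism, so that $\phi_2$ is irreducible of the same degree, closes a step the paper leaves implicit. In short, the two proofs share their key lemma, but yours completes the Galois-theoretic descent that the paper's proof omits.
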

\begin{proof}
	We consider the Jordan normal form of elements of $C_f$ in the algebraic closure of $\F_q$. If $f$ maps a polynomial $\phi$ to a partition $(\lambda_1,...,\lambda_i)$, the corresponding Jordan form contains the blocks $J_{\lambda_1}(\rho),...,J_{\lambda_i}(\rho)$ for each root of $\phi$. We then need to find the Jordan normal form of the square of a Jordan block. Coefficients of the upper diagonal of $J_k (\rho)^2$ are equal to $2\rho$, which is equal to $0$ if and only if $q$ is even since we are in $\GL_n(\F_q)$ and $\rho\neq 0$. If $q$ is odd, $J_k (\rho)^2$ is similar to $J_k(\rho^2)$ since one can be obtained from the other using elementary operations. If $q$ is even, $J_\lambda(\rho)^2$ has only two nonzero diagonals and it decomposes into two Jordan blocks $J_{\lfloor (k+1)/2\rfloor}(\rho^2)$ and $J_{\lfloor k/2\rfloor}(\rho^2)$:
	
	$$\begin{pmatrix} 
		\rho & 1 &  &  &  &  \\
		& \rho & 1 &  & (0) &  \\
		&  & \ddots & \ddots &  &  \\
		&  &  & \ddots & \ddots &  \\
		& (0) &  &  & \rho & 1 \\
		&  &  &  &  & \rho \\
	\end{pmatrix}^2=\begin{pmatrix} 
		\rho^2 & 2\rho& 1 &  &  & & \\
		& \rho^2 & 2\rho & 1 & & (0) & \\
		&  & \ddots & \ddots & \ddots &  &\\
		&  &  & \ddots & \ddots & \ddots &\\
		&  &  &  & \rho^2 & 2\rho  & 1\\
		& (0) &  &  &  & \rho^2 & 2\rho \\
		&  &  &  &  &  & \rho^2\\
	\end{pmatrix}.\qedhere$$
\end{proof}

Proposition (\ref{pr:1}) allows us to compute $f$ using only $f^2$. For some distinct irreducible polynomials $\phi$ and $\psi$, $\phi_2$ and $\psi_2$ may have common factors. In this case, if their factors are mapped to several partitions, we group their parts into one partition.

\begin{figure}[H]
	$$
	\begin{array}{ccccccccccc}
		f & :& \Phi_2 & \to & \mathcal P&\text{\qquad }&f^2 & :& \Phi_2 & \to & \mathcal P\\
		&&X^2+X+1&\mapsto& (5,2)&&&&X^2+X+1&\mapsto& (3,2,1,1)\\
		&&&&&&&&&&\\
		g & :& \Phi_3 & \to & \mathcal P&&g^2 & :& \Phi_3 & \to & \mathcal P\\
		&&X^2+1&\mapsto& (2,1)&&&&X+1&\mapsto& (2,2,1,1,1)\\
		&&X+1&\mapsto& (1)\\
	\end{array}
	$$
	\caption{\centering Two mappings with their respective squares. The mapping $f$ corresponds to a conjugacy class  in $\GL_{14}(\F_2)$, and $g$ to a conjugacy class  in $\GL_7(\F_3)$.}
\end{figure}

Similarly to the case of the symmetric group, the values of $|H\cap C_f|$ for $H<\GL_n(\F_q)$ and all $C_f$ can be summarized in a cycle index
\begin{equation}
	Z_H((x_{\phi,k})_{\phi\in \Phi_q,k\ge 1})=\frac 1{|H|}\sum_{f}|H\cap C_f|\cdot x_f,
\end{equation}
where $\dps x_f=\prod_{\phi\in \Phi_q}\prod_{k\in f(\phi) }x_{\phi,k}$. If $H_1<\GL_n(\F_q)$ and $H_2<\GL_m(\F_q)$, their product $H_1\times H_2$ is a subgroup of $\GL_n(\F_q)\times \GL_n(\F_q)<\GL_{n+m}(\F_q)$, and its cycle index as a subgroup of $\GL_{n+m}(\F_q)$ is equal to
\begin{equation}
	Z_{H_1\times H_2}(x_{\phi,k}) = Z_{H_1}(x_{\phi,k})\cdot Z_{H_2}(x_{\phi,k}).
\end{equation}

Let $\lambda=(\lambda_1,...,\lambda_r)\vdash n$. We define $\dps \GL_\lambda(\F_q):=\prod_{i=1}^r\GL_{\lambda_i}(\F_q)$, which is a subgroup of $\GL_n(\F_q)$, and we have 
\begin{equation}
	Z_{\GL_\lambda(\F_q)}(x_{\phi,k})=\prod_{i=1}^r\left(\sum_{|f|=\lambda_i}\frac{x_f}{z_f}\right).
\end{equation}

Let $T_s$ be the subgroup of upper triangular matrices in $\GL_n(\F_q)$. According to the Bruhat decomposition, $|T_s\backslash\GL_n(\F_q)/T_s|$ is equal to $n!$ and $|\selfinv^{\GL_n(\F_q)}_{T_s}|$ is the number of involutions on $n$ elements. We could also compute $|T_s\backslash\GL_n(\F_q)/T_s|$ and $|\selfinv^{\GL_n(\F_q)}_{T_s}|$ with Formulas (\ref{eq:4}) and (\ref{thm:2}), using a method to compute $|T_s\cap C_f|$ given by Fuchs and Kirillov~\cite{FK22}. 

\section{Applications}
We will now apply our formulas to various examples, including the enumeration of cycles, polygons and permutations of vertices of higher dimensional polytopes up to symmetry. We consider double cosets of parabolic subgroups of the symmetric groups and type B Coxeter groups, using them to enumerate certain types of matrices and Young tableaux. We finally enumerate certain double cosets of the general linear groups over a finite field.
\subsection{Cycles and polygons}
We consider cycles and polygons that can be drawn by connecting $n$ equally spaced points on a circle $x_1,...,x_n$, cycles being oriented and polygons not (see Figures \ref{fig:inversecycle} and \ref{fig:hexagones}). Let $\sigma\in \Ss_n$, we write $\Cyc(\sigma)$ (resp. $\Poly(\sigma)$) the cycle (resp. polygon) obtained by connecting cyclically the points $x_{\sigma(1)},...,x_{\sigma(n)}$. Permutations give the same cycle (resp. polygon) if and only if they belong to the same coset $\Ss_n/Z_n$ (resp. $\Ss_n/D_n$). 
Cycles of length $n$ up to rotation are in bijection with $Z_n\backslash \Ss_n/Z_n$ and polygons of length $n$ up to rotation and reflection are in bijection with $D_n\backslash \Ss_n/D_n$. An easy way to obtain the inverse of a cycle up to rotation $\Cyc(\sigma)$ (resp. a polygon up to rotation and reflection $\Poly(\sigma)$) is to relabel the vertices $x_1,...,x_n$ by $\sigma(1),...,\sigma(n)$ and then connecting vertices with labels $1,2,...,n$ (see Figure \ref{fig:inversecycle}). Many properties on polygons can be reformulated nicely on their inverse polygons. For example, when considering polygons with an odd number of vertices, polygons without parallel sides have as inverses the polygons such that "middles" of adjacent vertices (\emph{i.e.}, the only vertex at the same distance from both) are all distinct (\emph{cf.}~\cite[A309318] {oeis}). The numbers of symmetries of inverse polygons are also equal, although they do not necessarily share the same type of symmetry.

\begin{figure}[H]
	\centering
	\includegraphics[width=0.5\linewidth]{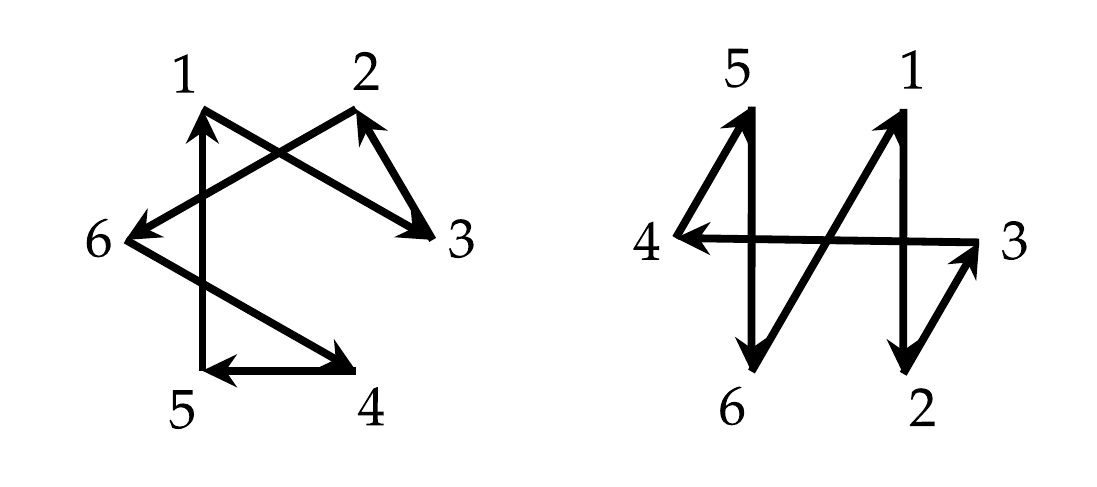}
	\caption{The cycle $\Cyc(513264)$ up to rotation and its inverse in $Z_n\backslash \Ss_n/Z_n$.}
	\label{fig:inversecycle}
\end{figure}

The following results will be given without proof, since they are direct applications of Theorem \ref{thm:2} and Equations (\ref{eq:13}) and (\ref{eq:14}).
\begin{prop}
	The number of self-inverse double cosets in $Z_n\backslash \Ss_n/Z_n$ is
	\begin{equation}
		|\selfinv^{\Ss_n}_{Z_n}|=\frac 1n\sum_{d|n}\varphi(d)\cdot \Sq(C_{[d^{n/d}]}).
	\end{equation}
\end{prop}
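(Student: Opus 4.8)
The plan is to specialize Theorem~\ref{thm:2} to $G=\Ss_n$ and $H=Z_n$, the cyclic subgroup of order $n$ acting on $\{x_1,\dots,x_n\}$ by rotation. Since $|Z_n|=n$, Theorem~\ref{thm:2} gives immediately
\[
|\selfinv^{\Ss_n}_{Z_n}|=\frac1n\sum_{\lambda\vdash n}|Z_n\cap C_\lambda|\cdot\Sq(C_\lambda),
\]
so the whole argument reduces to determining, for each partition $\lambda$, the number $|Z_n\cap C_\lambda|$ of elements of $Z_n$ of cycle type $\lambda$.

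These intersection numbers are precisely the data encoded in the cycle index of $Z_n$. Comparing the two expressions for $Z_{Z_n}$, namely the general form $\frac1n\sum_{\lambda\vdash n}|Z_n\cap C_\lambda|\,x_\lambda$ and the closed formula~\eqref{eq:13}, I would read off the coefficients term by term. The key observation is that the monomials $x_d^{n/d}$ appearing in~\eqref{eq:13} are pairwise distinct as $d$ ranges over the divisors of $n$, since each one corresponds to the distinct partition $[d^{n/d}]$ (the partition with $n/d$ parts all equal to $d$). Matching coefficients therefore yields $|Z_n\cap C_{[d^{n/d}]}|=\varphi(d)$ for every $d\mid n$, while $|Z_n\cap C_\lambda|=0$ for every partition $\lambda$ not of this shape.

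The only step needing genuine care is this coefficient extraction, that is, confirming that~\eqref{eq:13} really records the cycle-type distribution of $Z_n$. This can be quoted directly from~\eqref{eq:13} or verified by hand: a rotation by $j$ positions, $0\le j<n$, decomposes into $\gcd(n,j)$ cycles of common length $n/\gcd(n,j)$, so it has cycle type $[d^{n/d}]$ exactly when $\gcd(n,j)=n/d$, and the number of such $j$ equals $\varphi(d)$. Substituting the values $|Z_n\cap C_{[d^{n/d}]}|=\varphi(d)$ into the specialization of Theorem~\ref{thm:2} above collapses the sum over all partitions of $n$ into a sum over the divisors of $n$, giving exactly
\[
|\selfinv^{\Ss_n}_{Z_n}|=\frac1n\sum_{d\mid n}\varphi(d)\cdot\Sq(C_{[d^{n/d}]}),
\]
as claimed. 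I expect no serious obstacle here, as the result is a direct application once the cycle index is correctly interpreted.
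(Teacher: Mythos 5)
Your proof is correct and matches the paper's intended argument exactly: the paper omits the proof, stating that the result is a direct application of Theorem~\ref{thm:2} and Equation~\eqref{eq:13}, which is precisely the specialization and coefficient extraction you carried out. Your hand verification that a rotation by $j$ positions has cycle type $[d^{n/d}]$ with $d=n/\gcd(n,j)$, occurring for exactly $\varphi(d)$ values of $j$, correctly fills in the one detail the paper leaves implicit.
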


\begin{table}[H]
	\centering
	\begin{tabular}{cccccccccccc}
		\toprule
		$n$ & $2$ & $3$ & $4$ & $5$ & $6$ & $7$ & $8$ & $9$ & $10$ & $11$ & $12$\\ \midrule
		$|Z_n\backslash \Ss_n/Z_n|$ & $1$ & $2$ & $3$ & $8$ & $24$ & $108$ & $640$ & $4492$ & $36336$ & $329900$ & $3326788$\\ \midrule
		$|\selfinv^{\Ss_n}_{Z_n}|$ & $1$ & $2$ & $3$ & $6$ & $14$ & $34$ & $98$ & $294$ & $952$ & $3246$ & $11698$\\ \bottomrule
	\end{tabular}
	\caption{First values of $|Z_n\backslash \Ss_n/Z_n|$ and $\selfinv^{\Ss_n}_{Z_n}$.}
\end{table}

\begin{prop}
	The number of self-inverse double cosets in $D_n\backslash \Ss_n/D_n$ is equal to
	\begin{equation}
		|\selfinv^{\Ss_n}_{D_n}|=\begin{cases}\dps \frac 12\Sq(C_{[1,2^{(n-1)/2}]})+\frac 1{2n}\sum_{d|n}\varphi(d)\cdot \Sq(C_{[d^{n/d}]})&\text{ if } n \text{ is odd;}\\
			\dps \frac 14\left(\Sq(C_{[1^2,2^{n/2-1}]})+\Sq(C_{[2^{n/2}]})\right)+\frac 1{2n}\sum_{d|n}\varphi(d)\cdot \Sq(C_{[d^{n/d}]})&\text{ if }n\text{ is even.}\end{cases}
	\end{equation}
\end{prop}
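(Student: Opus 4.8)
The plan is to treat this as a direct consequence of Theorem~\ref{thm:2} together with the explicit cycle index of the dihedral group recorded in Equation~(\ref{eq:14}). Applying Theorem~\ref{thm:2} with $G=\Ss_n$ and $H=D_n$ gives
\begin{equation*}
	|\selfinv^{\Ss_n}_{D_n}|=\frac1{|D_n|}\sum_{\lambda\vdash n}|D_n\cap C_\lambda|\cdot \Sq(C_\lambda),
\end{equation*}
so everything reduces to knowing, for each cycle type $\lambda$, the number $|D_n\cap C_\lambda|$ of elements of $D_n$ of that type. But this is exactly the information packaged by the cycle index $Z_{D_n}$, since by definition $Z_{D_n}(x_1,x_2,\dots)=\frac1{|D_n|}\sum_{\lambda\vdash n}|D_n\cap C_\lambda|\,x_\lambda$. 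Thus the two objects differ only in that one carries the formal monomial $x_\lambda$ and the other the number $\Sq(C_\lambda)$.

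First I would read off from Equation~(\ref{eq:14}) which cycle types occur and with which coefficient. The rotations contribute the monomials $x_d^{n/d}$ for $d\mid n$, each with coefficient $\tfrac{\varphi(d)}{2n}$, corresponding to the cycle type $[d^{n/d}]$. The reflections supply the remaining terms: for $n$ odd, the single monomial $x_1x_2^{(n-1)/2}$ with coefficient $\tfrac12$, corresponding to $[1,2^{(n-1)/2}]$; for $n$ even, the two monomials $x_1^2x_2^{n/2-1}$ and $x_2^{n/2}$, each with coefficient $\tfrac14$, corresponding to $[1^2,2^{n/2-1}]$ and $[2^{n/2}]$. Replacing each complete monomial $x_\lambda$ appearing in (\ref{eq:14}) by the single number $\Sq(C_\lambda)$ then produces precisely the two cases of the claimed formula.

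The only point deserving care is the nature of this replacement: we are \emph{not} substituting numerical values for the individual variables $x_k$, but rather matching each whole monomial $x_\lambda$ to $\Sq(C_\lambda)$. This is legitimate because (\ref{eq:14}) is an explicit finite sum whose monomials are in one-to-one correspondence with the cycle types realized inside $D_n$, so the dictionary $x_\lambda\mapsto\Sq(C_\lambda)$ is unambiguous. A naive term-by-term substitution $x_k\mapsto\text{const}$ would be incorrect in general, since $\Sq(C_{[k^{m_k}]})$ is not a fixed power of a constant, but no such substitution is required here. I expect no genuine obstacle: once the correspondence between the monomials of $Z_{D_n}$ and the classes $C_\lambda$ is made explicit, the result is immediate from Theorem~\ref{thm:2}, which is exactly why the statement can be given without proof.
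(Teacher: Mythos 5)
Your proposal is correct and is precisely the paper's intended argument: the paper states this proposition without proof on the grounds that it is a direct application of Theorem~\ref{thm:2} and Equation~(\ref{eq:14}), which is exactly the reduction you carry out, including the correct identification of the cycle types of rotations and of the two kinds of reflections. One tiny imprecision — the monomials of $Z_{D_n}$ are not quite in one-to-one correspondence with cycle types, since for $n$ even the monomial $x_2^{n/2}$ arises both from the edge reflections and from the order-$2$ rotation ($d=2$ term) — is harmless, because the substitution $x_\lambda\mapsto\Sq(C_\lambda)$ is linear and can be applied term by term without first collecting coefficients.
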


\begin{table}[H]
	\centering
	\begin{tabular}{ccccccccccc}
		\toprule
		$n$ & $3$ & $4$ & $5$ & $6$ & $7$ & $8$ & $9$ & $10$ & $11$ & $12$ \\ \midrule
		$|D_n\backslash \Ss_n/D_n|$ & $1$ & $2$ & $4$ & $12$ & $39$ & $202$ & $1219$ & $9468$ & $83435$ & $836017$\\ \midrule
		$|\selfinv^{\Ss_n}_{D_n}|$ & $1$ & $2$ & $4$ & $8$ & $17$ & $52$ & $153$ & $482$ & $1623$ & $5879$ \\ \bottomrule
	\end{tabular}
	\caption{First values of $|D_n\backslash \Ss_n/D_n|$ and $\selfinv^{\Ss_n}_{D_n}$.}
\end{table}

\begin{figure}[ht]
	\centering
	\includegraphics[width=0.7\linewidth]{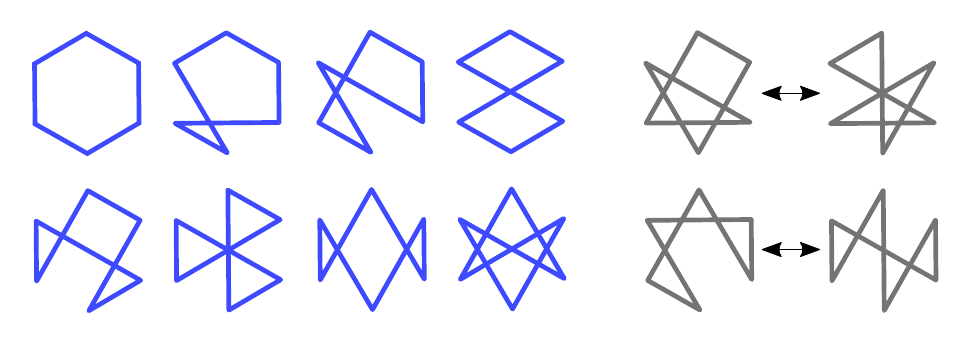}
	\caption{Hexagons up to rotation and reflection. The ones without an arrow are self-inverse.}
\label{fig:hexagones}
\end{figure}

We notice the following property, which follows from the fact that, when $n\equiv 3\mod 4$, reflections in $D_n$ are odd permutations.
\begin{prop}
	We have $|\selfinv^{\Ss_n}_{Z_n}|=2\cdot |\selfinv^{\Ss_n}_{D_n}|$ when $n\equiv 3\mod 4$.
\end{prop}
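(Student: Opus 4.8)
The plan is to compare the two explicit formulas established in the preceding two propositions. Since $n\equiv 3\pmod 4$ forces $n$ to be odd, I would invoke the odd-$n$ branch of the formula for $|\selfinv^{\Ss_n}_{D_n}|$. Multiplying that expression by two gives
\begin{equation*}
	2\cdot|\selfinv^{\Ss_n}_{D_n}|=\Sq(C_{[1,2^{(n-1)/2}]})+\frac1n\sum_{d|n}\varphi(d)\cdot\Sq(C_{[d^{n/d}]}),
\end{equation*}
and the rightmost sum is precisely the formula for $|\selfinv^{\Ss_n}_{Z_n}|$. So the entire statement collapses to the single claim that the extra reflection term vanishes, namely that $\Sq(C_{[1,2^{(n-1)/2}]})=0$ whenever $n\equiv 3\pmod4$.

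To establish that, I would identify $[1,2^{(n-1)/2}]$ as the cycle type of a reflection of the regular $n$-gon for odd $n$: such a reflection fixes the unique vertex on its axis and swaps the remaining $n-1$ vertices in $(n-1)/2$ pairs. The cleanest argument is then via parity, as the paper's hint suggests: the sign of this permutation is $(-1)^{(n-1)/2}$, which equals $-1$ exactly when $(n-1)/2$ is odd, that is, when $n\equiv 3\pmod4$. Since the square of any permutation is an even permutation, an odd permutation can admit no square root, and therefore $\Sq(C_{[1,2^{(n-1)/2}]})=0$. Alternatively, one can read the vanishing directly from the explicit formula for $\Sq(C_\lambda)$ given earlier: here the multiplicity $m_2=(n-1)/2$ of the part $2$ is odd, and that formula returns $0$ as soon as some $m_{2k}$ is odd.

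I do not expect a genuine obstacle here: once the reflection term is isolated, the argument reduces to a one-line parity observation. The only points requiring care are confirming the arithmetic equivalence $(n-1)/2\text{ odd}\iff n\equiv 3\pmod4$ and noting that the cyclic formula carries no analogous correction term (which is immediate, since $Z_n$ consists only of rotations and so contributes no odd-permutation conjugacy class). Combining the displayed identity with the vanishing of the reflection term then yields $2\cdot|\selfinv^{\Ss_n}_{D_n}|=|\selfinv^{\Ss_n}_{Z_n}|$, as desired.
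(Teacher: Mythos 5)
Your proof is correct, but it follows a genuinely different route from the paper's. You compare the two closed enumeration formulas: doubling the odd-$n$ formula for $|\selfinv^{\Ss_n}_{D_n}|$ leaves exactly the $Z_n$ formula plus the reflection term $\Sq(C_{[1,2^{(n-1)/2}]})$, and that term vanishes because when $n\equiv 3\pmod 4$ the multiplicity $m_2=(n-1)/2$ is odd, so the class consists of odd permutations, which have no square roots (equivalently, the explicit $\Sq(C_\lambda)$ formula returns $0$). The paper instead argues structurally at the level of double cosets: it observes that each double coset $D_ngD_n$ splits into at most four $Z_n$-double cosets (those of $g$, $sg$, $gs$, $sgs$ for a reflection $s$), then uses the same parity fact -- reflections are odd when $n\equiv 3\pmod 4$ -- to show that in any relation $g^{-1}=xgx'$ with $x,x'\in D_n$, the elements $x,x'$ are both rotations or both reflections, and deduces that each self-inverse $D_n$-double coset contains exactly two self-inverse $Z_n$-double cosets. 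Your argument is shorter and purely computational, but it is conditional on the two preceding propositions (which the paper states without proof as applications of Theorem \ref{thm:2}); the paper's argument is self-contained, and it yields strictly more information, namely an explicit two-to-one correspondence between the self-inverse double cosets themselves rather than just an equality of their counts. Both proofs ultimately hinge on the same parity observation, deployed at different levels.
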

\begin{proof}
	Each self-inverse double coset in $Z_n\backslash \Ss_n/Z_n$ is contained in a self-inverse double coset in $D_n\backslash \Ss_n/D_n$. Conversely, each double coset $D_ngD_n$ is partitioned into at most $4$ different double cosets in $Z_n\backslash \Ss_n/Z_n$ containing respectively $g$, $sg$, $gs$ and $sgs$, where $s$ is a reflection in $D_n$ ($g$ and $sgs$ may eventually belong to the same double coset in $Z_n\backslash \Ss_n/Z_n$, but not $g$ and $sg$). Consider $g\in \Ss_n$ belonging to a self-inverse double coset in $D_n\backslash \Ss_n/D_n$, \emph{i.e.}, there exists $x,x'\in D_n$ such that $g^{-1}=xgx'$. When $n\equiv 3\mod 4$, reflections in $D_n$ are odd permutations, hence $x$ and $x'$ are either both rotations or both reflections. This implies that $g$ or $gs$ belong to a self-inverse double coset in $Z_n\backslash \Ss_n/Z_n$. We then observe that both $g$ and $gs$ belong to self-inverse double cosets if and only if $g$ and $sgs$ belong to the same double coset, hence each self-inverse double coset in $D_n\backslash \Ss_n/D_n$ corresponds to exactly two self-inverse double cosets in $Z_n\backslash \Ss_n/Z_n$.
\end{proof}

\subsection{Invertible Boolean functions, permutations of vertices of polytopes}
Invertible Boolean functions of $n$ variables, \emph{i.e.}, permutations of $\F_2^n$ up to permutation and complementation of the variables, are in bijection with $\cube_n\backslash \Ss_{2^n}/\cube_n$, where $\cube_n$ is the symmetry group of the cube of dimension $n$~\cite{LYWL20}. The number of such functions~\cite[A000654]{oeis} was computed up to $n=9$ in~\cite{LYWL20}. Using Theorem \ref{thm:2}, we can compute $|\selfinv^{\Ss_{2^n}}_{\cube_n}|$, which is the number of involutive Boolean functions of $n$ variables up to permutation and complementation of the variables, with the first values given in Table \ref{tab:3}.

\begin{table}[H]
	\centering
	\begin{tabular}{cccccc}
		\toprule
		$n$ & $1$& $2$ &$3$ & $4$ & $5$\\ \midrule 
		$|\cube_n\backslash \Ss_{2^n}/\cube_n|$ & $1$ & $2$ & $52$ & $142090700$ & $17844701940501123640681816160$ \\ \midrule
		$|\selfinv^{\Ss_{2^n}}_{\cube_n}|$ & $1$ & $2$ & $24$ & $120930$ & $5854443002344516$ \\ \bottomrule
	\end{tabular}
	\caption{First values of $|\cube_n\backslash \Ss_{2^n}/\cube_n|$ and $|\selfinv^{\Ss_{2^n}}_{\cube_n}|$.}
	\label{tab:3}
\end{table}

Elements of $\cube_n\backslash \Ss_{2^n}/\cube_n$ can also be seen as permutations of the vertices of an $n$-cube, up to isometry (\emph{cf.} Figure \ref{fig:cubes}).

\begin{figure}
	\centering
	\includegraphics[width=0.7\linewidth]{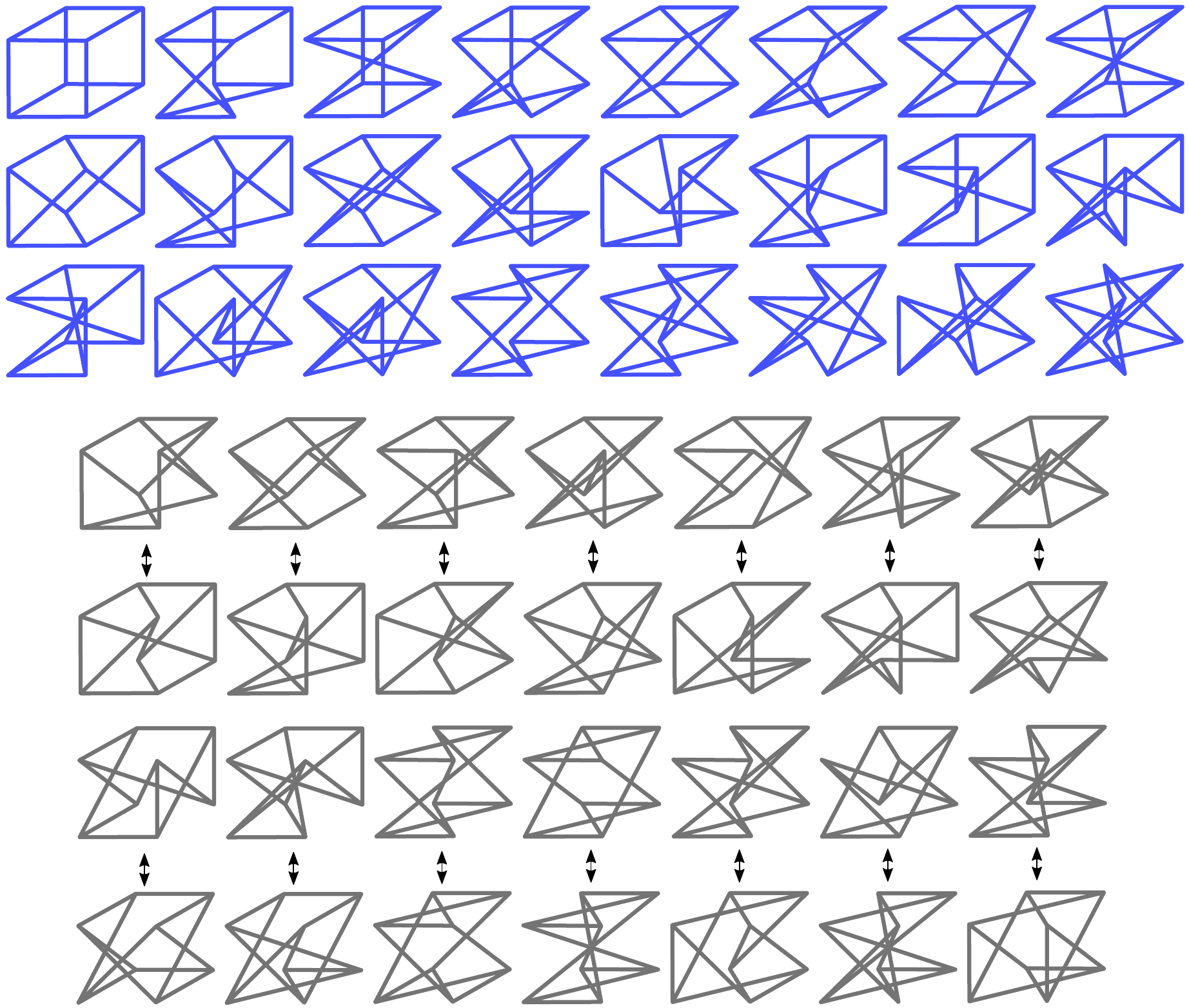}
	\caption{Permutations of the vertices of a cube, up to isometry. The ones without an arrow are self-inverse.}
\label{fig:cubes}
\end{figure}

Let $\octa_n$ be the symmetry group of the hyperoctahedron of dimension $n$ ($\octa_n$ is isomorphic to $\cube_n$, but they act on different sets). Double cosets in $\octa_n\backslash \Ss_{2n}/\octa_n$ are in bijection with integer partitions of $n$~\cite{DS22}, and intervene in the study of zonal polynomials~\cite{Mac98}. In~\cite{DS22}, Diaconis and Simper explained how to compute to which double coset a permutation $\sigma \in \Ss_{2n}$ belongs, by constructing a graph $T(\sigma)$ on vertices $\{1,2,...,2n\}$ such that for all $1\le i\le n$, an edge connects vertices $2i-1$ and $2i$, and another edge connects vertices $\sigma(2i-1)$ and $\sigma(2i)$. Let $\lambda$ be the partition of $n$ obtained by dividing the lengths of the cycles of $T(\sigma)$ by 2, then $\sigma$ belongs to the double coset indexed by $\lambda$, which we denote by $\octa_\lambda$.

Similarly to $ \Ss_{2^n}/\cube_n$, cosets of $\octa_n$ in $\Ss_{2n}$ correspond to permutations of the vertices of the hyperoctahedron. These can also be seen as perfect matchings of the complete graph $K_{2n}$ by projecting the line segments between non-adjacent vertices on a Coxeter plane.

We show that every double coset in $\octa_n\backslash \Ss_{2n}/\octa_n$ contains at least one involution. More precisely, we have the following result.
\begin{prop}
	Let $\lambda\vdash n$ with multiplicities $(m_k)_{k\ge 1}$ and $\octa_\lambda$ the double coset of $\octa_n\backslash \Ss_{2n}/\octa_n$ corresponding to $\lambda$. The number of involutions in $\octa_\lambda$ is equal to
	\begin{equation}
		\frac{n!}{\dps \prod_{k\ge 1}m_k!k!^{m_k}}\prod_{k\ge 1}\left(\sum_{i=0}^{\lfloor m_k/2\rfloor}a_k^ib_k^{m_k-2i}\frac{m_k!}{2^ii!(m_k-2i)!}\right)
	\end{equation}
   where $\dps a_k:=2^{2k-1}k!(k-1)!=|\octa_{(k)}|$ and $b_k:=\begin{cases}
   	2^{k-1}k! &\text{ if }k \text{ is even;}\\
   	2^{k-1}(k-1)!(k+1) &\text{ if }k \text{ is odd.}
   \end{cases}$ is the number of involutions contained in $\octa_{(k)}$.
   
\end{prop}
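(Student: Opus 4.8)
The plan is to realize $\octa_n$ as the stabilizer in $\Ss_{2n}$ of the perfect matching $M_0=\{\{1,2\},\{3,4\},\dots,\{2n-1,2n\}\}$, so that, following Diaconis--Simper, the coset type of $\sigma$ is read off from the graph $T(\sigma)=M_0\cup\sigma(M_0)$: its components are even cycles whose half-lengths form $\lambda$. The first fact I would record is that every $M_0$-edge lies in a single component of $T(\sigma)$, so each cycle of length $2k$ accounts for exactly $k$ of the $n$ pairs. Calling the set of pairs inside one cycle a \emph{block}, a permutation of coset type $\lambda$ thus partitions the $n$ pairs into $m_k$ blocks of size $k$ for each $k$.

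The structural observation specific to involutions is that $\sigma(T(\sigma))=\sigma(M_0)\cup\sigma^2(M_0)=\sigma(M_0)\cup M_0=T(\sigma)$, so $\sigma$ is a graph automorphism of $T(\sigma)$. Hence $\sigma$ permutes the components of $T(\sigma)$, and being an involution it either fixes a cycle setwise or swaps two cycles of equal length. I expect this dichotomy to drive the whole count: I would argue that an involution of coset type $\lambda$ is exactly the datum of a set partition of the $n$ pairs into $m_k$ blocks of size $k$, an involution of the blocks of each size, and local maps on each orbit. On a block of size $k$ \emph{fixed} by $\sigma$, the restriction $\sigma|_{\text{block}}$ is an involution whose own matching graph $M_0\cup\sigma(M_0)$ is a single $2k$-cycle, i.e. an involution of coset type $(k)$ in $\Ss_{2k}$; on a pair of blocks $A,B$ of size $k$ \emph{swapped} by $\sigma$, the map is a bijection $\phi\colon A\to B$ (with $\sigma|_B=\phi^{-1}$), constrained so that $T(\sigma)$ restricted to $A$, namely $M_0|_A\cup\phi^{-1}(M_0|_B)$, is a single $2k$-cycle. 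One checks that a fixed block yields one cycle of length $2k$ and a swapped pair yields two, so $i$ swapped pairs together with $m_k-2i$ fixed blocks produce the required $m_k$ parts equal to $k$.

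I would then count each ingredient. The number of set partitions is $n!/\prod_k m_k!\,(k!)^{m_k}$. For fixed $k$, splitting the $m_k$ blocks into $i$ swapped pairs and $m_k-2i$ fixed blocks can be done in $\frac{m_k!}{2^i i!\,(m_k-2i)!}$ ways; each fixed block contributes the number $b_k$ of involutions of coset type $(k)$ in $\Ss_{2k}$, i.e. the number of involutions in $\octa_{(k)}$. For a swapped pair the number of admissible $\phi$ equals $|\octa_{(k)}|=a_k$: writing $N=\phi^{-1}(M_0|_B)$, the constraint says precisely that $M_0|_A\cup N$ is a single $2k$-cycle, so $N$ ranges over the $|\octa_{(k)}|/|\octa_k|$ matchings of coset type $(k)$, while each such $N$ is realized by exactly $|\octa_k|=2^k k!$ bijections $\phi$; multiplying gives $|\octa_{(k)}|$. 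Assembling these factors, summing over $i$, and taking the product over $k$ yields the stated expression. The explicit values are then obtained by direct enumeration in $\Ss_{2k}$: $|\octa_{(k)}|=2^{2k-1}k!(k-1)!$ comes from counting matchings $N$ with $M_0\cup N$ a single $2k$-cycle and multiplying by $|\octa_k|$, and $b_k$ from enumerating the involutions among these, which splits by the parity of $k$ into the two-case formula.

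The step I expect to be the real obstacle is the reconstruction principle of the second paragraph: proving that the triple (set partition, involution on blocks, local maps) is genuinely in bijection with involutions of coset type $\lambda$, with the $T$-cycles recovered as the blocks. The delicate point is the asymmetry between the two factors — verifying that a swapped pair of blocks contributes the \emph{full} double-coset size $a_k=|\octa_{(k)}|$ (all admissible bijections $\phi$), whereas a fixed block contributes only the involution count $b_k$ — together with the bookkeeping that a swapped pair accounts for two parts of size $k$ while a fixed block accounts for one. Once this is pinned down the remaining computations are routine.
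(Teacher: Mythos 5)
Your reduction argument is correct, and it is in fact the half of the proof that the paper leaves implicit: the paper's proof consists of the one-line remark that the proposition ``follows from the fact that $b_k$ is indeed the number of involutions of $\octa_{(k)}$,'' and then devotes all of its effort to establishing the two-case formula for $b_k$. Your proposal does the reverse. The observation that an involution $\sigma$ is an automorphism of $T(\sigma)$, the dichotomy between cycles fixed setwise and cycles swapped in pairs, the factor $\frac{m_k!}{2^i i!(m_k-2i)!}$, and above all the argument that a swapped pair contributes exactly $a_k=|\octa_{(k)}|$ (there are $|\octa_{(k)}|/|\octa_k|$ admissible matchings $N$, each realized by $|\octa_k|=2^kk!$ bijections $\phi$) are all correct, and they give a genuine proof of the reduction to the constants $a_k$ and $b_k$; your derivation of $a_k$ itself (count the matchings $N$ with $M_0\cup N$ a single $2k$-cycle, of which there are $2^{k-1}(k-1)!$, and multiply by $|\octa_k|$) is also sound, if compressed.

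The genuine gap is the value of $b_k$, which you dispose of with ``enumerating the involutions among these, which splits by the parity of $k$ into the two-case formula.'' That two-case formula is part of the statement to be proved, it is not a routine computation, and proving it is the \emph{entire} content of the paper's proof: there, the group $\langle(1,2),(3,4),\dots,(2k-1,2k)\rangle\cong(\mathbb Z/2)^k$ acts by conjugation on the set $I_k$ of involutions of $\octa_{(k)}$, every orbit is shown to have $2^k$ elements, the orbits are identified with $2k$-cycles on $\{1,\dots,k\}^2$ symmetric under swapping the coordinates, and these are counted according to whether they have two diagonal vertices ($k!/2$ of them) or none ($(k-1)!/2$ of them, occurring only for $k$ odd), which yields $b_k=2^{k-1}k!$ for $k$ even and $b_k=2^{k-1}(k-1)!(k+1)$ for $k$ odd. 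Nothing in your proposal does this work; note also that your closing paragraph identifies the reconstruction principle as the ``real obstacle,'' whereas that part of your argument is fine --- the unproven step is precisely $b_k$. The gap is fillable inside your own framework: an involution $\sigma$ with $\sigma(M_0)=N$ is exactly an involutive automorphism of the $2k$-cycle $M_0\cup N$ that swaps the two alternating matchings, and for $k\ge 2$ these are the $k$ reflections through pairs of opposite vertices, together with the rotation by $k$ steps when $k$ is odd; multiplying $k+[k\text{ odd}]$ by the $2^{k-1}(k-1)!$ admissible cycles recovers the stated $b_k$. But as written, this step is asserted rather than proved, so the proposal does not yet establish the proposition.
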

\begin{proof}
	This statement follows from the fact that $b_k$ is indeed the size of the set $I_k$ of involutions of $\octa_{(k)}$. To prove it, we observe that for $k>1$, the group generated by transpositions $(1,2),(3,4)...(2k-1,2k)$ acts faithfully by conjugation on $I_k$, and the number of fixed points of involutions in $I_k$ (which is either $0$ or $2$) is fixed by their action. For all $\sigma\in I_k$, the orbit of this action containing $\sigma$ is characterized by the graph $U(\sigma)$ obtained by relabeling each vertex $i\in T(\sigma)$ by $(\lceil i/2\rceil,\lceil \sigma(i)/2\rceil)$. The graphs $\{U(\sigma):\sigma\in I_k\}$ are the cycles of length $2k$ with vertices in $\{1,...,k\}^2$, symmetric with respect to the permutation of both coordinates, and such that for all $i\in \{1,...,k\}$, there is an edge connecting two vertices whose first coordinates are equal to $i$ (see Figure \ref{fig:3}). These graphs have either $0$ or $2$ fixed points, \emph{i.e.}, vertices whose both coordinates are equal. There are $k!/2$ such graphs having $2$ fixed points, and the number of those with no fixed point is equal to $(k-1)!/2$ if $k$ is odd and $0$ otherwise. Since every orbit contains $2^k$ elements, we get the value of $b_k$.
\end{proof}
Since all double cosets in $\octa_n\backslash \Ss_{2n}/\octa_n$ contain an involution, they are all self-inverse. This property also follows from the fact that $(\Ss_{2n},\octa_n)$ is a Gelfand pair, meaning the permutation representation of $\Ss_{2n}$ on the cosets of $\octa_n$ is multiplicity-free, hence $|\selfinv^{\Ss_{2n}}_{\octa_n}|=|\octa_n\backslash \Ss_{2n}/\octa_n|$ by Equations (\ref{eq:9}), (\ref{eq:1}) and the fact that all characters of symmetric groups are characters of real representations.

\begin{figure}[ht]
	\centering
	\includegraphics[width=0.7\linewidth]{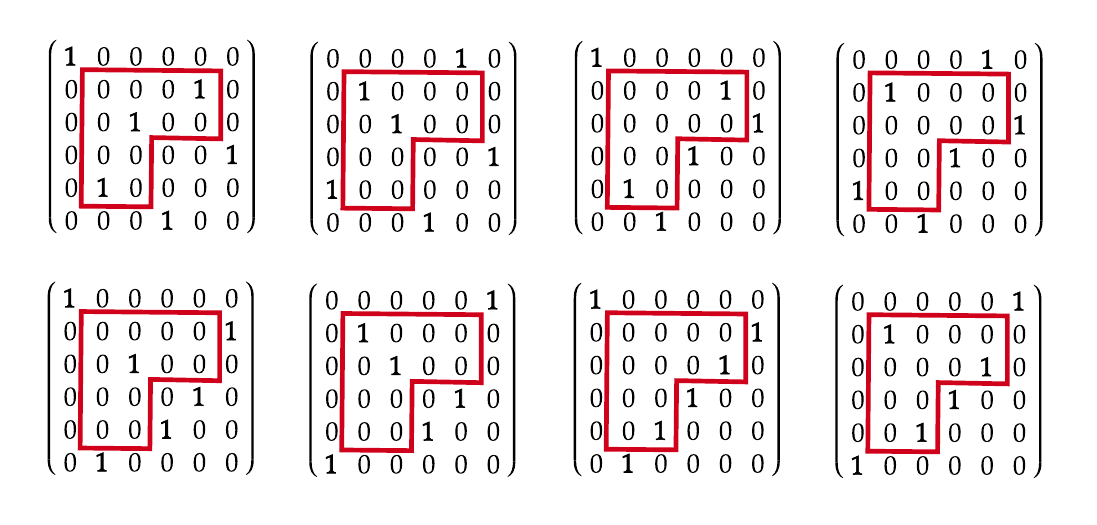}
	\caption{The $8$ involutions corresponding to a cycle of length $6$.}
\label{fig:3}
\end{figure}

We have seen that we can compute the number of permutations up to isometry of the vertices of the hypercube and of the hyperoctahedron. We can also compute the number of permutations up to isometry of the vertices of some other polytopes:
\begin{itemize}
	\item Icosahedron: \numprint{34288}, out of which \numprint{1210} are self-inverse;
	\item Dodecahedron: \numprint{168951598306448}, out of which \numprint{197995308} are self-inverse;
	\item 24-cell: \numprint{467520560895051562}, out of which \numprint{15184383396} are self-inverse;
	\item 600-cell: \numprint{32260334266247719220621711294803108539509772054099294995638847185096612448133502070635002726060994969511867690382633171943631821094864835171510244658706237050954969326843751674806593463033600},\\ out of which \numprint{1107274514227682955989628044559513224247430148585107198991302040384188121361044957390472571582634488} are self-inverse;
	\item 120-cell: its number of permutations up to isometry and the number of those which are self-inverse have respectively 1400 and 710 digits (\emph{cf.} appendix).
\end{itemize}

\subsection{Parabolic double cosets in symmetric groups}

We will now study double cosets in $\Ss_\lambda\backslash \Ss_n/\Ss_\mu$ with $\lambda,\mu\vdash n$ and $\Ss_\lambda,\Ss_\mu$ as defined in Equation \eqref{eq:3}. These double cosets are in bijection with "contingency matrices", \emph{i.e.}, matrices with nonnegative integer coefficients, with row sums $\lambda$ and column sums $\mu$~\cite{DS22}. Inverse double cosets correspond to transposed matrices, thus self-inverse cosets correspond to symmetric matrices.

Using the Robinson-Schensted-Knuth correspondence (see~\cite{St99}, page 316), we know that nonnegative integer matrices with row sums $\lambda$ and column sums $\mu$ are in bijection with pairs of semistandard Young tableaux $(P,Q)$ of the same shape, $P$ having weight $\lambda$ and $Q$ having weight $\mu$; symmetric matrices correspond to pairs where both tableaux are equal.
We then obtain the following formulas involving Kostka numbers for $|\Ss_\lambda\backslash \Ss_n/\Ss_\mu|$ and $|\selfinv^{\Ss_n}_{\Ss_\lambda}|$, which can also be expressed using Theorem \ref{thm:2} and Equation \eqref{eq:4}:
\begin{equation}
	|\Ss_\lambda\backslash \Ss_n/\Ss_\mu|=\sum_{\nu\vdash n}K_{\nu,\lambda}K_{\nu,\mu}=\frac 1{|\Ss_\lambda|\cdot|\Ss_\mu|}\sum_{\nu\vdash n}z_\nu|\Ss_\lambda\cap C_\nu|\cdot |\Ss_\mu\cap C_\nu|,
\end{equation}
\begin{equation}
	|\selfinv^{\Ss_n}_{\Ss_\lambda}|=\sum_{\nu\vdash n}K_{\nu,\lambda}=\frac 1{|\Ss_\lambda|}\sum_{\nu\vdash n}|\Ss_\lambda\cap C_\nu|\cdot \Sq(C_\nu).
\end{equation}

Using Equation \eqref{eq:3}, we get

\begin{equation}
	\sum_{n\ge 0}\sum_{\nu\vdash n}x_\nu\sum_{\lambda\vdash n}\frac{|\Ss_\lambda \cap C_\nu|}{|\Ss_\lambda|}=\prod_{k\ge 1}\frac 1{1-\sum_{\nu \vdash k}x_\nu z_\nu^{-1}}.
	\label{eq:5}
\end{equation}
This allows us to compute new terms of the sequences $(\sum_{\lambda,\mu\vdash n} |\Ss_\lambda\backslash \Ss_n/\Ss_\mu|)_{n\ge 1}$~\cite[A321652]{oeis} and $(\sum_{\lambda \vdash n}|\selfinv^{\Ss_n}_{\Ss_\lambda}|)_{n\ge 1}$~\cite[A178718]{oeis}, without having to compute Kostka numbers. The first sequence gives the number of nonnegative integer matrices with sum $n$, no zero rows and columns, and non-increasing row sums and column sums. The second one gives the number of such matrices which are symmetric.
\begin{table}[H]
	\centering
	\begin{tabular}{ccccccccccc}
		\toprule
		$n$ & $1$& $2$ &$3$ & $4$ & $5$ & $6$ & $7$ & $8$ & $9$&$10$\\ \midrule
		$\sum_{\lambda,\mu\vdash n} |\Ss_\lambda\backslash \Ss_n/\Ss_\mu|$ & $1$ & $5$ & $19$ & $107$ & $573$ & $4050$ & $29093$ & $249301$ & $2271020$ & $23378901$\\ \midrule
		$\sum_{\lambda \vdash n}|\selfinv^{\Ss_n}_{\Ss_\lambda}|$ & $1$ & $3$ & $7$ & $21$ & $57$ & $182$ & $565$ & $1931$ & $6670$ & $24537$\\ \bottomrule
	\end{tabular}
	\caption{First values of A321652 and A178718 in~\cite{oeis}.}
\end{table}
To compute these sequences, we get the values of 
\begin{equation}
	\alpha_\nu:=\sum_{\lambda\vdash n}\frac{|\Ss_\lambda \cap C_\nu|}{|\Ss_\lambda|}
\end{equation}
as the coefficient of $x_\nu$ in $\dps \prod_{k\ge 1}\frac 1{1-\sum_{\nu \vdash k}x_\nu z_\nu^{-1}}$. The values of A178718 are then given by
\begin{equation}
	\sum_{\lambda,\nu\vdash n}K_{\nu,\lambda}=\sum_{\nu \vdash n}\alpha_\nu\cdot \Sq(C_\nu),
\end{equation}
and the values of A321652 by
\begin{equation}
	\sum_{\lambda,\mu,\nu\vdash n}K_{\nu,\lambda}K_{\nu,\mu}=\sum_{\nu \vdash n}\alpha_\nu^2\cdot z_\nu.
\end{equation}
These equations could also be written using symmetric functions. By setting $x_\nu=p_\nu$, we have $|\Ss_\lambda\backslash \Ss_n/\Ss_\mu|=\langle h_\lambda,h_\mu\rangle$, $\dps \frac{|\Ss_\lambda \cap C_\nu|}{|\Ss_\lambda|}=\langle h_\lambda,p_\nu^*\rangle$, and Equation \eqref{eq:5} gives two different expressions of $\dps \sum_{n\ge 0}\sum_{\lambda\vdash n}h_\lambda$.

A similar formula gives the number of $(0,1)$-matrices with sum $n$, no zero row nor column, and non-increasing row sums and column sums~\cite[A068313]{oeis}. By a variant of the Robinson-Schensted-Knuth correspondence (see~\cite{St99}, page 331), $(0,1)$-matrices are in bijection with pairs of semistandard Young tableaux with conjugate shapes. This gives us another formula involving Kostka numbers:

\begin{equation}
	\sum_{\nu\vdash n}K_{\nu,\lambda}K_{\overline \nu,\mu}=\frac 1{|\Ss_\lambda|\cdot|\Ss_\mu|}\sum_{\nu\vdash n}\varepsilon(\nu)z_\nu|\Ss_\lambda\cap C_\nu|\cdot |\Ss_\mu\cap C_\nu|,
	\label{eq:6}
\end{equation}
where $\dps \varepsilon(\nu)=(-1)^{n-l(\nu)}$ is the signature of the permutations belonging to $C_\nu$.
In terms of symmetric functions, Equation \eqref{eq:6} gives two expressions of $\langle e_\lambda,e_\mu\rangle$. We then obtain the values of A068313 as: 
\begin{equation}
	\sum_{\lambda,\mu,\nu\vdash n}K_{\nu,\lambda}K_{\overline\nu,\mu}=\sum_{\nu \vdash n}\alpha_\nu^2\cdot \varepsilon(\nu)z_\nu.
\end{equation}

\subsection{Parabolic double cosets in type B}
We now consider the Coxeter group $B_n$, defined here as the subgroup of $\Ss_{2n}$ consisting in the centrally symmetric permutation matrices of size $2n$. It is generated by $S={s_1,s_2,...,s_n}$, where $s_1,s_2,...,s_n$ are respectively the matrices of the permutations $(1,2)(2n-1,2n)$, $(2,3)(2n-2,2n-1),...,(n,n+1)$.
Let $I,J$ be subsets of $S$. Let $M\in B_n$, we define $_IM_J$ as the matrix obtained by merging rows of $M$ with indices $i$ and $i+1$, and those with indices $2n-i$ and $2n-i+1$ if and only if $s_i\in I$, and by doing the same operation on the columns for each element of $J$. 
Let $W_I,W_J$ be the subgroups of $B_n$ generated respectively by $I$ and $J$. Two matrices $M,M'\in B_n$ belong to the same double coset in $W_I\backslash B_n/W_J$ if and only if $_IM_J= \!_IM'_J$. Similarly to the case of the symmetric group, the contingency matrices obtained this way can be easily described. Let $\lambda_I$ (resp. $\lambda_J$) be the partition obtained by merging the parts with indices $i$ and $i+1$, and those with indices $2n-i$ and $2n-i+1$, in the partition $[1^{2n}]$ for each $s_i\in I$ (resp. $J$). The set $\{_IM_J:M\in B_n\}$ contains all centrally symmetric matrices with row sums $\lambda_I$ and column sums $\lambda_J$. By the Robinson-Schensted-Knuth correspondence, these matrices are in bijection with pairs of semistandard Young tableaux $(P,Q)$ of the same shape, having respective weights $\lambda_I$ and $\lambda_J$, and invariant by the Schützenberger involution.

Conjugacy classes of $B_n$ are in bijection with ordered pairs of integer partitions $(\lambda,\mu)$ such that $|\lambda|+|\mu| = n$. Elements of $B_n$ can be seen as signed permutations, \emph{i.e.}, elements of $\{1,-1\}^n\times \Ss_n$; $((a_1,...,a_n),\sigma)$ belongs to the conjugacy class corresponding to $(\lambda,\mu)$, denoted by $C_{\lambda,\mu}$, if $\lambda$ (resp. $\mu$) contains the lengths of the cycles $(i_1,...,i_k)$ of $\sigma$ such that $a_{i_1}a_{i_2}\cdots a_{i_k}=1$ (resp. $-1$).

The centralizer of an element of $C_{\lambda,\mu}$ has size 
\begin{equation}
	\frac{|B_n|}{|C_{\lambda,\mu}|}=z_\lambda z_\mu2^{\ell(\lambda)+\ell(\mu)}.
\end{equation}
\begin{prop}
	Let $(m_k)_{k\ge 1}$ and $(\overline m_k)_{k\ge 1}$ be the multiplicities of $\lambda$ and $\mu$ respectively. We then have
	\begin{equation}
		\Sq(C_{\lambda,\mu})=\begin{cases}
			0 \qquad \text{ if there exists }k\ge 1\text{ such that }m_{2k} \text{ or }\overline m_k\text{ is odd;}\\
			\dps \prod_{k \text{ odd}}\left(\sum_{i=0}^{\lfloor m_k/2\rfloor}\frac{m_k!k^i2^{m_k-2i}}{i!(m_k-2i)!}\right)\prod_{k \text{ even}}\frac{m_k!~k^{m_k/2}}{(m_k/2)!}\prod_{k\ge 1}\frac{\overline m_k!k^{\overline m_k/2}}{(\overline m_k/2)!} \quad \text{ otherwise.}
		\end{cases}
	\end{equation}
\end{prop}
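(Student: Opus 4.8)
The plan is to apply the general identity \eqref{eq:12}, namely $\Sq(C_{\lambda,\mu})=\frac{1}{|C_{\lambda,\mu}|}\sum_{(\nu,\rho)^2=(\lambda,\mu)}|C_{\nu,\rho}|$, together with the centralizer size $|C_{\nu,\rho}|=|B_n|/(z_\nu z_\rho 2^{\ell(\nu)+\ell(\rho)})$ recorded above, following the strategy used for $\Ss_n$. The indispensable input is the effect of squaring on the signed cycle type. Writing the action of a signed $k$-cycle as $i_j\mapsto a_j i_{j+1}$ with $a_j\in\{\pm1\}$ (indices modulo $k$) and total sign $\epsilon=a_1\cdots a_k$, and using $w(-x)=-w(x)$, one computes
\[
	w^2(i_j)=a_ja_{j+1}\,i_{j+2}.
\]
From this I would deduce two rules: a signed cycle of odd length $k$ (of either sign) squares to a single positive $k$-cycle, because the sign of its image is $\prod_j a_ja_{j+1}=\epsilon^2=1$; and a signed cycle of even length $k$ and sign $\epsilon$ squares to two cycles of length $k/2$, each again of sign $\epsilon$.

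These rules determine exactly which $(\nu,\rho)$ square to $(\lambda,\mu)$. A negative cycle of the square can only come from an even negative cycle of the root, splitting into two equal halves, so a square root exists only when every $\overline m_k$ is even; likewise an even positive cycle of the square only comes from an even positive cycle of the root, forcing $m_{2k}$ even. These are the two vanishing conditions. In the remaining case I would parametrize the admissible roots by their free data: $\overline m_k/2$ negative $2k$-cycles accounting for the parts of $\mu$; $m_k/2$ positive $2k$-cycles accounting for the even parts of $\lambda$; and, for each odd $k$, a number $i_k$ of positive $2k$-cycles contributing $2i_k$ of the odd parts of size $k$, the remaining $m_k-2i_k$ such parts being split freely into positive and negative odd $k$-cycles. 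The decisive combinatorial point is that an odd part of $\lambda$ has two possible origins — an odd cycle of either sign, or one half of a positive even cycle — and this dichotomy is what creates the inner summation in the statement.

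It then remains to substitute into \eqref{eq:12}. Each cycle length appearing in $\nu$, and each appearing in $\rho$, is produced by exactly one of these three kinds of blocks, so both the prefactor $z_\lambda z_\mu 2^{\ell(\lambda)+\ell(\mu)}$ and the denominators $z_\nu z_\rho 2^{\ell(\nu)+\ell(\rho)}$ factor over the blocks, and the whole expression becomes a product of three independent contributions, one for each of $\prod_{k\text{ odd}}$, $\prod_{k\text{ even}}$ and $\prod_{k\ge1}$. After cancelling matching powers of $2$ and of $k$, the $\mu$-contribution for a fixed $k$ collapses to $\overline m_k!\,k^{\overline m_k/2}/(\overline m_k/2)!$ and the even-$\lambda$ contribution to $m_k!\,k^{m_k/2}/(m_k/2)!$. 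For each odd $k$, summing over the positive/negative split of the $m_k-2i_k$ odd $k$-cycles via $\sum_{p+q=t}\frac1{p!q!}=2^t/t!$ and then collecting the powers of $k$ and of $4^i$ leaves precisely $\sum_{i=0}^{\lfloor m_k/2\rfloor}\frac{m_k!\,k^i2^{m_k-2i}}{i!(m_k-2i)!}$, as claimed.

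The main obstacle is the first step: correctly tracking the signs under squaring, and then recognizing in the second step that odd parts of $\lambda$ admit both an odd-cycle origin and a doubled-even-cycle origin. Once the parametrization of square-root classes is set up with this in mind, the rest is the same routine factor-by-factor cancellation already carried out in the symmetric-group case.
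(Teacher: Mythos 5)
Your proposal is correct and follows essentially the same route as the paper: both rest on the four squaring rules for signed cycles (odd cycles of either sign give a positive cycle of the same length; even cycles split into two halves of the same sign) and then count the square roots of a fixed signed cycle type. You simply make explicit what the paper compresses into "straightforward to check" and "we then enumerate" — deriving the sign rules via $w^2(i_j)=a_ja_{j+1}i_{j+2}$, organizing the count over conjugacy classes through Equation \eqref{eq:12} and the centralizer orders $z_\nu z_\rho 2^{\ell(\nu)+\ell(\rho)}$, and carrying out the factor-by-factor cancellation, all of which checks out.
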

\begin{proof}
	It is straightforward to check that in a signed permutation, a cycle gives one or two cycles when squared:
	\begin{itemize}
		\item a positive cycle of odd length $k$ gives a positive cycle of length $k$;
		\item a negative cycle of odd length $k$ gives a positive cycle of length $k$;
		\item a positive cycle of even length $k$ gives two positive cycles of length $k/2$;
		\item a negative cycle of even length $k$ gives two negative cycles of length $k/2$.
	\end{itemize}
We then enumerate the signed permutations whose squares contain positive cycles of lengths $\lambda$ and negative cycles of lengths $\mu$.
\end{proof}

Similarly to the case of the symmetric groups, we define a cycle index for subgroups $H$ of $B_n$:
\begin{equation}
	Z_H(x_1,x_2,...;\overline x_1,\overline x_2,...):=\frac 1{|H|}\sum_{h\in H}\prod_{k\ge 1}x_k^{j_k(h)}\overline x_k^{\overline j_k(h)}=\frac 1{|H|}\sum_{|\lambda|+|\mu|=n}|H\cap C_{\lambda,\mu}|\cdot x_\lambda \overline x_\mu,
\end{equation}
where $j_k(h)$ (resp. $\overline j_k(h)$) is the number of positive (resp. negative) cycles of length $k$ in the signed permutation corresponding to $h$.
\begin{prop}
	Let $I\subset S$, let $\ell_1,...,\ell_r$ be the lengths of maximal sequences of consecutive elements of $I$ which do not contain $s_n$, and let $k$ be the length of the maximal sequence of consecutive elements of $I$ which contains $s_n$. Let $\kappa = (\ell_1+1,...,\ell_r+1,\underbrace{1,...,1}_{n-k-\sum_i\ell_i-r})$ (see Figure \ref{fig:2}). The subgroup $W_I$ is isomorphic to $\dps B_k\times \prod_i\Ss_{\kappa_i}$, and its cycle index is equal to
	\begin{equation}
		Z_{W_I}(x_1,x_2,...;\overline x_1,\overline x_2,...)=\sum_{|\lambda|+|\mu|=k}\frac{x_\lambda\overline x_\mu}{z_\lambda z_\mu2^{l(\lambda)+l(\mu)}}\prod_{i=1}^r\left(\sum_{\nu\vdash \kappa_i}\frac{x_\nu}{z_\nu}\right).
	\end{equation} \label{pr:2}
\end{prop}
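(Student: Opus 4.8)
The plan is to recognize $W_I$ as a standard parabolic subgroup of the Coxeter group $B_n$ and to exploit the general principle that such a subgroup factors as a direct product indexed by the connected components of the restriction of the Coxeter diagram to $I$. The diagram of $B_n$ is the path $s_1 - s_2 - \cdots - s_{n-1} = s_n$, whose unique multiple edge (of order $4$) joins $s_{n-1}$ and $s_n$, with $s_n$ at an end. Restricting to $I$ breaks this path into maximal runs of consecutive generators: a run avoiding $s_n$ spans a type $A$ sub-diagram on $\ell_i$ nodes and generates a symmetric group $\Ss_{\ell_i+1}=\Ss_{\kappa_i}$, while the (at most one) run containing $s_n$ spans a type $B$ sub-diagram on $k$ nodes and generates a copy of $B_k$, with the convention $k=0$ and $B_0$ trivial when $s_n\notin I$. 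Together with the trivial $\Ss_1$ factors coming from the $\kappa_i=1$ isolated coordinates, this gives the isomorphism $W_I\cong B_k\times\prod_i\Ss_{\kappa_i}$.

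First I would make the embeddings explicit inside $\Ss_{2n}$. Writing $s_j=(j,j+1)(2n-j,2n+1-j)$ for $j<n$ and $s_n=(n,n+1)$, one checks that $s_n$ is the only generator linking the ``first half'' $\{1,\dots,n\}$ to the ``second half'' $\{n+1,\dots,2n\}$. Hence a symmetric-group run $\{s_a,\dots,s_b\}$ with $b<n$ permutes the block $\{a,\dots,b+1\}$ and, in mirror fashion, its central reflection $\{2n-b,\dots,2n+1-a\}$, never swapping a first-half point with a second-half one; as a signed permutation such an element carries no sign change, so all its cycles are positive. The run $\{s_{n-k+1},\dots,s_n\}$, by contrast, acts on the central $2k$ points symmetrically about $n+\tfrac12$ and realizes all of $B_k$, producing both positive and negative cycles.

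Next I would invoke multiplicativity of the type-$B$ cycle index under these direct products, which holds by the same block-decomposition argument as in the symmetric-group case: the factors act on disjoint, respectively invariant blocks of the $2n$ points, so the signed cycle type of a product element is the disjoint union of those of its factors and $Z_{W_I}$ is the product of the cycle indices of the factors. It then remains to compute each factor. For the $B_k$ factor, which is all of $B_k$, the definition of the cycle index together with the centralizer formula $|B_k|/|C_{\lambda,\mu}|=z_\lambda z_\mu 2^{\ell(\lambda)+\ell(\mu)}$ gives
\begin{equation*}
	Z_{B_k}=\frac{1}{|B_k|}\sum_{|\lambda|+|\mu|=k}|C_{\lambda,\mu}|\,x_\lambda\overline{x}_\mu=\sum_{|\lambda|+|\mu|=k}\frac{x_\lambda\overline{x}_\mu}{z_\lambda z_\mu 2^{\ell(\lambda)+\ell(\mu)}}.
\end{equation*}
For each $\Ss_{\kappa_i}$ factor, all cycles are positive, so its type-$B$ cycle index is its ordinary symmetric-group cycle index $\sum_{\nu\vdash\kappa_i}x_\nu/z_\nu$ read in the positive variables; the trivial $\Ss_1$ factors contribute $x_1=\sum_{\nu\vdash1}x_\nu/z_\nu$, consistently. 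Multiplying these expressions yields the claimed formula.

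The main obstacle is really just bookkeeping rather than a genuine difficulty: one must correctly match the combinatorial data $(k,\ell_1,\dots,\ell_r)$ to the diagram components and verify that the symmetric-group factors contribute only positive cycles while the single $B_k$ factor absorbs all of the sign-changing behaviour. Once the block decomposition of the $2n$ points is established, the multiplicativity of the cycle index and the two per-factor computations are routine.
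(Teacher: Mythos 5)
Your proof is correct and follows essentially the same route as the paper: decompose $W_I$ into factors generated by the maximal runs of consecutive generators, observe that these factors act on disjoint (centrally symmetric) blocks of $\{1,\dots,2n\}$ and are isomorphic to $B_k$ and the $\Ss_{\kappa_i}$, then multiply the per-factor cycle indices. You supply some details the paper leaves implicit — the verification that the symmetric-group factors produce only positive cycles and the explicit computation of $Z_{B_k}$ from the centralizer formula — but the underlying argument is the same.
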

\begin{proof}
	The subgroup of $B_n$ generated by $\{s_i,s_{i+1},...,s_{i+\ell}\}$ contains matrices of permutations acting on $\{i,i+1,...,i+l,2n-i-\ell+1,...,2n-i,2n-i+1\}$. It is isomorphic to $B_{\ell+1}$ if $I$ contains $s_n$, and to $\Ss_{\ell+2}$ if not. Subsets of consecutive elements of $I$ generate subgroups of $B_n$ acting on disjoint subsets of $\{1,...,2n\}$, which are isomorphic to $B_k, \Ss_{\kappa_1},\Ss_{\kappa_2}...$. Therefore, $W_I$ is isomorphic to their product and the cycle index of $W_I$ is equal the product of the cycle indices of these subgroups.
\end{proof}
\begin{figure}[ht]
	\centering
	\includegraphics[width=0.6\linewidth]{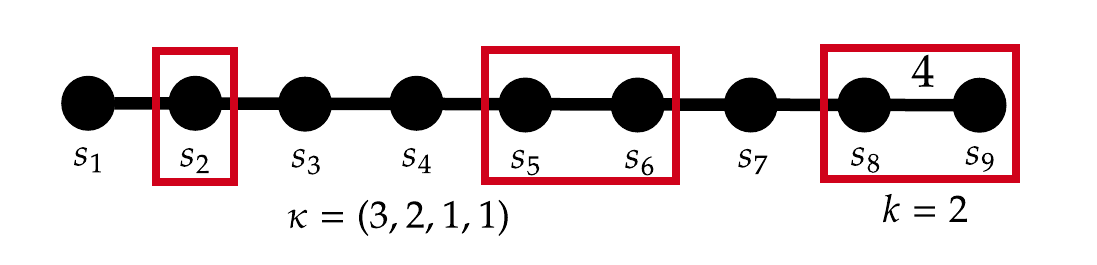}
	\caption{With $n=9$ and $I=\{s_2,s_5,s_6,s_8,s_9\}$, we get $k=2$ and $\kappa = (3,2,1,1)$.}
	\label{fig:2}
\end{figure}

We can use Proposition \ref{pr:2} to obtain the values of $|W_I\cap C_{\lambda,\mu}|$, and efficiently compute $|W_I\backslash B_n/W_J|$ and $\selfinv^{B_n}_{W_I}$.
For example, by taking $n=42$ and
\begin{equation*}
	\begin{split}
		I=\{s_3,s_4,s_4,s_6,s_7,s_8,s_9,s_{12},s_{13},s_{15},s_{17},s_{19},s_{22},s_{23}, s_{24},s_{26},s_{28},s_{29},s_{32},s_{34},s_{35},s_{39},s_{40},s_{41},s_{42}\},
	\end{split}
\end{equation*}
we computed in less than a second 
\begin{align*}
	|W_I\backslash B_n/W_I| &= \numprint{12651530609717357371835019916458406463078400} \text{ and}\\
	|\selfinv_{W_I}^{B_n}|&= \numprint{3280075848815058449211136}.
\end{align*}

\subsection{Matrices up to permutation of rows and columns}

Let $P_n$ be the subgroup of permutation matrices in $\GL_n(\F_q)$. Equation \eqref{eq:4} and Theorem \ref{thm:2} yield the following formulas:
\begin{equation}
	|P_n\backslash \GL_n(\F_q) /P_n|=\frac1{n!^2}\sum_f z_f\cdot |P_n\cap C_f|^2.
\end{equation}
\begin{equation}
	|\selfinv^{\GL_n(\F_2)}_{P_n}|=\frac1{n!}\sum_f |P_n\cap C_f|\cdot \Sq(C_f).
\end{equation}
To compute these quantities, we need to know the values of $|P_n\cap C_f|$, which can be obtained with the following result.
\begin{prop}
	The generating function of all cycle indexes of the subgroups of permutation matrices $P_n$ is equal to:
	\begin{equation}
		\dps \sum_{n\ge 0}t^n Z_{P_n}(x_{\phi,k})=\exp\left(\sum_{k\ge 1}\frac {t^k}k\prod_{X^k-1=\prod_i\phi_{k,i}^{\alpha_{k,i}}}x_{\phi_{k,i},\alpha_{k,i}}\right).
	\end{equation}
\end{prop}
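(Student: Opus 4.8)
The plan is to identify the cycle index $Z_{P_n}$ with a sum over conjugacy classes of permutation matrices in $\GL_n(\F_q)$, and then to translate the combinatorial structure of a permutation into the module-theoretic data classifying these conjugacy classes. A permutation matrix is determined by a permutation $\sigma \in \Ss_n$, and its conjugacy class in $\GL_n(\F_q)$ depends only on the multiset of cycle lengths of $\sigma$. So the first step is to understand, for a single cycle of length $k$, which mapping $f \colon \Phi_q \to \mathcal P$ describes the conjugacy class of the corresponding $k \times k$ cyclic permutation matrix. The minimal and characteristic polynomial of a $k$-cycle permutation matrix is $X^k - 1$, and as an $\F_q[X]$-module the space $\F_q^k$ is the regular representation $\F_q[X]/(X^k-1)$. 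Factoring $X^k - 1 = \prod_i \phi_{k,i}^{\alpha_{k,i}}$ into irreducible powers over $\F_q$ and applying the module decomposition recalled in the excerpt, I get $\F_q[X]/(X^k-1) \cong \bigoplus_i \F_q[X]/(\phi_{k,i}^{\alpha_{k,i}})$, so the mapping $f$ sends each $\phi_{k,i}$ to the single-part partition $(\alpha_{k,i})$. This is exactly the data encoded by the monomial $\prod_i x_{\phi_{k,i},\alpha_{k,i}}$ appearing inside the exponential.

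The second step is to assemble single cycles into arbitrary permutations multiplicatively. A permutation with $j_k$ cycles of length $k$ for each $k$ gives a permutation matrix that is a block-diagonal sum of cyclic blocks, so the classifying mapping $f$ is obtained by concatenating (adding the partitions of) the contributions of the individual cycles, and the associated monomial $x_f$ is the product over all cycles of the single-cycle monomials $\prod_i x_{\phi_{k,i},\alpha_{k,i}}$. Thus for a permutation of cycle type $\lambda = [1^{j_1} 2^{j_2} \cdots]$ the monomial $x_f$ equals $\prod_{k \ge 1} \left( \prod_i x_{\phi_{k,i},\alpha_{k,i}} \right)^{j_k}$. Since $Z_{P_n} = \frac{1}{|P_n|}\sum_{\sigma \in \Ss_n} x_{f(\sigma)}$ and $|P_n| = n!$, I have rewritten $Z_{P_n}$ as the ordinary cycle index of the symmetric group $\Ss_n$ with each variable $x_k$ specialized to $p_k := \prod_{X^k - 1 = \prod_i \phi_{k,i}^{\alpha_{k,i}}} x_{\phi_{k,i},\alpha_{k,i}}$.

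The final step is to invoke the classical exponential formula for the cycle index of the symmetric groups. It is a standard fact that
\begin{equation*}
	\sum_{n \ge 0} t^n \, Z_{\Ss_n}(x_1, x_2, \ldots) = \exp\left( \sum_{k \ge 1} \frac{t^k x_k}{k} \right),
\end{equation*}
which follows from the cycle-type decomposition $Z_{\Ss_n} = \sum_{\lambda \vdash n} \prod_k \frac{x_k^{m_k}}{k^{m_k} m_k!}$ together with the product expansion of the exponential. Substituting $x_k \mapsto p_k$ gives exactly the claimed generating function. The main obstacle is the careful verification in the first step that the conjugacy class of a $k$-cycle permutation matrix is governed by the full factorization of $X^k - 1$, including the multiplicities $\alpha_{k,i}$, which are genuinely present when $p \mid k$ for $p$ the characteristic of $\F_q$; one must check that the rational canonical form reads off $f$ correctly from the primary decomposition of $\F_q[X]/(X^k-1)$ rather than assuming $X^k-1$ is separable. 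Everything else — the multiplicativity over cycles and the appeal to the symmetric-group exponential formula — is routine once this identification is established.
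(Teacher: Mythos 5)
Your proof is correct and follows essentially the same route as the paper's: identify the conjugacy class of a single $k$-cycle permutation matrix via the factorization $X^k-1=\prod_i\phi_{k,i}^{\alpha_{k,i}}$ (the mapping sending each $\phi_{k,i}$ to the one-part partition $(\alpha_{k,i})$), combine cycles multiplicatively into block-diagonal matrices, and conclude with the exponential formula for the cycle indices of the symmetric groups. Your write-up is in fact more explicit than the paper's, which states the single-cycle identification and the multiplicativity but leaves the $\F_q[X]$-module justification and the final exponential-formula step implicit.
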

\begin{proof}
	Two permutation matrices $P_\sigma$ and $P_\tau$ are conjugate in $\GL_n(\F_q)$ if and only if $\sigma$ and $\tau$ are conjugate in $\Ss_n$, \emph{i.e.}, they have the same cycle type $\lambda$. More precisely, if $X^k-1$ has irreducible factors $\phi_{k,i}$ with multiplicities $\alpha_{k,i}$, the conjugacy class containg permutation matrices of cycles of length $k$ corresponds to the function mapping each $\phi_{k,i}$ to the partition $(\alpha_{k,i})$ of size $1$. We can then obtain the functions corresponding to each conjugacy class of permutation matrices by combining the functions corresponding to cycles. 
\end{proof}

When $q=2$, we find the following values~\cite[A224879]{oeis}, along with the first values of $|\selfinv^{\GL_n(\F_2)}_{P_n}|$:
\begin{table}[H]
	\centering
	\begin{tabular}{cccccccccc}
		\toprule
		$n$ & $1$& $2$ &$3$ & $4$ & $5$ & $6$ & $7$ & $8$ & $9$\\ \midrule
		$|P_n\backslash \GL_n(\F_2)/P_n|$ & $1$ & $2$ & $7$ & $51$ & $885$ & $44206$ & $6843555$ & $3373513302$ & $5366987461839$\\ \midrule
		$|\selfinv^{\GL_n(\F_2)}_{P_n}|$ & $1$ & $2$ & $5$ & $19$ & $87$ & $706$ & $8309$ & $192090$ & $6961741$ \\ \bottomrule
	\end{tabular}
	\caption{First values of $|P_n\backslash \GL_n(\F_2)/P_n|$ and $|\selfinv^{\GL_n(\F_2)}_{P_n}|$.}
\end{table}
\subsection{Matrices up to scalar multiplication of rows and columns}

\begin{thm}
	Let $\diag_n$ be the subgroup of diagonal matrices in $\GL_n(\F_q)$ and $\lambda=[1^{m_1},2^{m_2}...]$. We have $$  |\diag_n\backslash \GL_n(\F_q)/\diag_n|=\frac {n!^2}{(q-1)^{2n}}\sum_{\lambda\vdash n} \prod_{i=1}^{l(\lambda)}(q-i)\prod_{k\ge 1}\frac{\prod_{j=0}^{k-1}(q^k-q^j)^{m_k}}{k!^{2m_k}m_k!}.$$
	\label{thm:1}
\end{thm}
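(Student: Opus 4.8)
The plan is to specialize Formula~\eqref{eq:4} to $G=\GL_n(\F_q)$ with $H_1=H_2=\diag_n$. Since a diagonal matrix is determined by its $n$ diagonal entries, each free to range over $\F_q^\times$, we have $|\diag_n|=(q-1)^n$, and Formula~\eqref{eq:4} becomes
\begin{equation*}
	|\diag_n\backslash\GL_n(\F_q)/\diag_n|=\frac1{(q-1)^{2n}}\sum_f z_f\,|\diag_n\cap C_f|^2 .
\end{equation*}
Everything then reduces to identifying which conjugacy classes $C_f$ meet $\diag_n$ and to computing $z_f$ and $|\diag_n\cap C_f|$ for those. A matrix of $\GL_n(\F_q)$ is conjugate to a diagonal matrix precisely when it is diagonalizable with eigenvalues in $\F_q^\times$, which in terms of the mapping $f$ means $f$ is supported on the linear polynomials $X-\alpha$, $\alpha\in\F_q^\times$, with each $f(X-\alpha)$ of the form $(1^{k_\alpha})$. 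The multiset $\{k_\alpha\}$ of eigenvalue multiplicities forms a partition $\lambda\vdash n$, and I would sum according to this shape $\lambda=[1^{m_1}2^{m_2}\cdots]$.

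The three ingredients, each depending only on $\lambda$, are computed as follows. First, the number of classes of shape $\lambda$ equals the number of ways to pick, among the $q-1$ elements of $\F_q^\times$, which $m_k$ of them carry multiplicity $k$ for each $k$; this is $\frac{1}{\prod_k m_k!}\prod_{i=1}^{l(\lambda)}(q-i)$, a quantity that conveniently vanishes as soon as $l(\lambda)\ge q$, i.e. when there are too few eigenvalues available. Second, because $f$ is supported in degree one, Equation~\eqref{eq:10} gives $z_f=\prod_\alpha c_1\big(f(X-\alpha)\big)$, and evaluating the definition of $c_1$ at the partition $(1^k)$ (for which $m_1=k$ and $d_1=k$) yields
\begin{equation*}
	c_1\big((1^k)\big)=\prod_{i=1}^{k}\left(q^{k}-q^{k-i}\right)=\prod_{j=0}^{k-1}\left(q^{k}-q^{j}\right)=|\GL_k(\F_q)|,
\end{equation*}
so that $z_f=\prod_{k\ge1}|\GL_k(\F_q)|^{m_k}$, as one expects since the centralizer of such a semisimple element is $\prod_\alpha\GL_{k_\alpha}(\F_q)$. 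Third, the diagonal matrices inside $C_f$ are exactly the rearrangements of the eigenvalue multiset, so $|\diag_n\cap C_f|=\frac{n!}{\prod_{k\ge1}(k!)^{m_k}}$.

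Substituting the three quantities and grouping the terms of a common shape $\lambda$ would produce
\begin{equation*}
	|\diag_n\backslash\GL_n(\F_q)/\diag_n|=\frac{n!^2}{(q-1)^{2n}}\sum_{\lambda\vdash n}\prod_{i=1}^{l(\lambda)}(q-i)\prod_{k\ge1}\frac{|\GL_k(\F_q)|^{m_k}}{k!^{2m_k}\,m_k!},
\end{equation*}
which is the claimed identity once $|\GL_k(\F_q)|$ is written as $\prod_{j=0}^{k-1}(q^k-q^j)$. I expect the algebra to be routine; the only steps needing genuine care are the evaluation of $c_1((1^k))$ as $|\GL_k(\F_q)|$ and the class count, where one must check that distinct eigenvalue-to-multiplicity assignments give distinct conjugacy classes while the $m_k$ eigenvalues sharing a common multiplicity are interchangeable (hence the division by $\prod_k m_k!$). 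As confirmation I would check the formula against $n=1$ (giving $1$, since $\diag_1=\GL_1$) and $n=2$ (giving $q+4$, which reduces to $|\GL_2(\F_2)|=6$ when $q=2$ and $\diag_2$ is trivial).
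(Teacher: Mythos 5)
Your proposal is correct and follows essentially the same route as the paper's proof: both specialize Formula~\eqref{eq:4} to $H_1=H_2=\diag_n$, count the $\prod_{i=1}^{l(\lambda)}(q-i)/\prod_k m_k!$ conjugacy classes of diagonalizable matrices with eigenvalue-multiplicity partition $\lambda$, identify the centralizer size $\prod_k\bigl(\prod_{j=0}^{k-1}(q^k-q^j)\bigr)^{m_k}$, and count the $n!/\prod_k k!^{m_k}$ diagonal matrices in each class. Your write-up is in fact more detailed than the paper's (which leaves the intersection count and the final substitution implicit), and your evaluation of $c_1((1^k))=|\GL_k(\F_q)|$ via Equation~\eqref{eq:10} and the sanity checks at $n=1,2$ are all correct.
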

\begin{proof}
	Matrices of $\diag_n$ whose eigenvalues have multiplicities $\lambda$ belong to the $\dps \frac{\prod_{i=1}^{l(\lambda)}(q-i)}{\prod_{k\ge 1}m_k!}$ different conjugacy classes of type $\ctype$, where $\ctype_1$ is a multiset with multiplicities $\lambda$ and $\ctype_d=\varnothing$ for $d>1$. Each of these classes has a centralizer of cardinality $\dps \prod_i\prod_{j=0}^{\lambda_i-1}(q^{\lambda_i}-q^j)$, and we then apply Equation \eqref{eq:4}.
\end{proof}

As $\diag_n=\GL_{[1^n]}(\F_q)$, Theorem \ref{thm:1} can be generalized to obtain the following result.
\begin{thm}
	Let $\lambda, \mu$ be partitions of $n$. We have
	\begin{equation}
		|\GL_\lambda(\F_q)\backslash\GL_n(\F_q)/\GL_\mu(\F_q)|=\frac 1{|\GL_\lambda(\F_q)|\cdot |\GL_\mu(\F_q)|}\sum_{\ctype}\ncl_\ctype \cdot  a_{\ctype,\lambda}\cdot a_{\ctype,\mu}\prod_{d\ge 1}\prod_{\lambda\in \ctype_d}c_d(\lambda),
	\end{equation}
	where the sum is over all distinct types of conjugacy classes in $\GL_n(\F_q)$, and $a_{\ctype,\lambda}$ is the number of elements of $\GL_{\lambda}(\F_q)$ in any conjugacy class of type $\ctype$.
\end{thm}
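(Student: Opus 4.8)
The plan is to apply Equation (\ref{eq:4}) to the groups $G=\GL_n(\F_q)$, $H_1=\GL_\lambda(\F_q)$, $H_2=\GL_\mu(\F_q)$, and then reorganize the resulting sum over individual conjugacy classes $C_f$ into a sum over \emph{types} $\ctype$. Concretely, Equation (\ref{eq:4}) gives
\begin{equation*}
	|\GL_\lambda(\F_q)\backslash\GL_n(\F_q)/\GL_\mu(\F_q)|=\frac1{|\GL_\lambda(\F_q)|\cdot |\GL_\mu(\F_q)|}\sum_f \frac{|G|}{|C_f|}\cdot |\GL_\lambda(\F_q)\cap C_f|\cdot |\GL_\mu(\F_q)\cap C_f|.
\end{equation*}
The strategy is to show that every factor appearing in a single summand depends on $f$ only through its type $\ctype(C_f)$, so that all $\ncl_\ctype$ conjugacy classes of a given type contribute equally and the sum collapses to one indexed by types, with the multiplicity $\ncl_\ctype$ supplied by Proposition (\ref{eq:11}).

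The key steps, in order, are as follows. First I would recall from the text that $z_f=|G|/|C_f|$ depends only on the type, since Equation (\ref{eq:10}) expresses it as $\prod_{\phi}c_{\deg\phi}(f(\phi))$, which regroups as $\prod_{d\ge1}\prod_{\lambda\in\ctype_d}c_d(\lambda)$; this gives exactly the product factor in the claimed formula. Second, I would invoke the definition of $a_{\ctype,\lambda}$ as the number of elements of $\GL_\lambda(\F_q)$ lying in any one conjugacy class of type $\ctype$; the content here is that this count is the same for every class of that type, which follows because $\GL_\lambda(\F_q)$ is a product of general linear groups and the intersection count $|\GL_\lambda(\F_q)\cap C_f|$ is governed by the cycle index $Z_{\GL_\lambda(\F_q)}$, whose monomials $x_f/z_f$ are invariant under permuting which irreducible polynomial of a fixed degree receives which partition. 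Thus $|\GL_\lambda(\F_q)\cap C_f|=a_{\ctype(C_f),\lambda}$ and likewise for $\mu$. Third, I would group the sum: the number of classes of type $\ctype$ is $\ncl_\ctype$ by Proposition (\ref{eq:11}), so
\begin{equation*}
	\sum_f z_f\,|\GL_\lambda\cap C_f|\,|\GL_\mu\cap C_f|=\sum_{\ctype}\ncl_\ctype\cdot a_{\ctype,\lambda}\,a_{\ctype,\mu}\prod_{d\ge1}\prod_{\lambda\in\ctype_d}c_d(\lambda),
\end{equation*}
and dividing by $|\GL_\lambda(\F_q)|\cdot|\GL_\mu(\F_q)|$ yields the stated identity.

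The main obstacle I anticipate is the second step: rigorously justifying that $|\GL_\lambda(\F_q)\cap C_f|$ depends only on $\ctype(C_f)$ and not on the individual map $f$. The delicate point is that a single factor $\GL_{\lambda_i}(\F_q)$ can contain elements whose rational canonical form involves \emph{any} of the $I_d(q)$ irreducibles of degree $d$ symmetrically, so permuting the polynomials within a fixed degree permutes conjugacy classes while preserving their intersection sizes with $\GL_\lambda(\F_q)$. I would make this precise by observing that $\GL_\lambda(\F_q)$ is stable (up to conjugacy) under the field automorphisms and the combinatorial symmetry encoded in the cycle index $Z_{\GL_\lambda(\F_q)}(x_{\phi,k})$, whose coefficient of $x_f$ is symmetric in the variables $x_{\phi,k}$ across polynomials $\phi$ of equal degree; this symmetry is exactly what the notion of type is designed to capture, as already noted for $z_f$ after Equation (\ref{eq:10}). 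Once this invariance is established the remaining manipulations are the routine regrouping above.
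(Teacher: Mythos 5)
Your proposal is correct and takes essentially the same approach as the paper: apply Equation (\ref{eq:4}), note via Equation (\ref{eq:10}) that $z_f$ depends only on the type, show that $|\GL_\lambda(\F_q)\cap C_f|$ depends only on the type using the product structure $\GL_\lambda(\F_q)=\prod_i\GL_{\lambda_i}(\F_q)$, and collapse the sum over conjugacy classes to a sum over types with multiplicities $\ncl_\ctype$ from Equation (\ref{eq:11}). The only difference is cosmetic: you obtain the type-invariance of $|\GL_\lambda(\F_q)\cap C_f|$ from the invariance of the cycle index $Z_{\GL_\lambda(\F_q)}$ under degree-preserving permutations of $\Phi_q$, whereas the paper counts directly how the supports of the factor mappings $f_1,f_2,\dots$ can intersect --- these express the same symmetry, and your additional appeal to field automorphisms is unnecessary (and would not suffice on its own, since the Frobenius does not act transitively on the irreducible polynomials of a given degree).
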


\begin{proof}
	We use the fact that, for each conjugacy class  in $\GL_n(\F_q)$, the number of elements of $\GL_\lambda(\F_q)$ belonging to it only depends on its type. To see this, let $\ctype_1,\ctype_2...$ be types of conjugacy classes in $\GL_{\lambda_1}, \GL_{\lambda_2},...$. For all choices $C_{f_1},C_{f_2},...$ of conjugacy classes of type $\ctype_1,\ctype_2,...$, we can compute the number of elements in $C_{f_1}\times C_{f_2}\times\cdots$ of type $\ctype$ by looking at the different ways the supports of $f_1,f_2...$ can intersect. This quantity only depends on the types $\ctype_1,\ctype_2,...$, implying that $a_{\ctype,\lambda}$ is indeed the number of elements of $\GL_{\lambda}(\F_q)$ in any conjugacy class of type $\ctype$. 
	We then combine Equations \eqref{eq:10}, \eqref{eq:11} and \eqref{eq:4} to get the result.
\end{proof}

Using this idea, we can compute for each type $\ctype$ of conjugacy class in $\GL_n(\F_q)$ the number of elements of $\GL_\lambda(\F_q)$ belonging to a conjugacy class of type $\ctype$.
We implemented an algorithm in Sagemath to compute the $a_{\ctype,\lambda}$ and $|\GL_\lambda(\F_q)\backslash\GL_n(\F_q)/\GL_\mu(\F_q)|$.\\

$|\diag_n\backslash \GL_n(\F_q)/\diag_n|$ appears to be a monic polynomial in $q$ with positive integer coefficients and of degree $(n-1)^2$. More generally, we observed the following property, which we checked up to $n=8$.
\begin{cnj}
	For $\lambda,\mu\vdash n$, $|\GL_\lambda(\F_q)\backslash\GL_n(\F_q)/\GL_\mu(\F_q)|$ is a monic polynomial in $q$ with positive integer coefficients.
	\label{cnj:1}
\end{cnj}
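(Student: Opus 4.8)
The plan is to read off polynomiality, monicity and positivity from a geometric reinterpretation of the explicit formula above, rather than from the formula itself. Write $D_{\lambda,\mu}(q):=|\GL_\lambda(\F_q)\backslash\GL_n(\F_q)/\GL_\mu(\F_q)|$. The starting point is that the cosets $\GL_n(\F_q)/\GL_\mu(\F_q)$ are in bijection with \emph{ordered direct-sum decompositions} $\F_q^n=W_1\oplus\cdots\oplus W_{\ell(\mu)}$ with $\dim W_j=\mu_j$, and likewise for $\lambda$. Hence $D_{\lambda,\mu}(q)$ counts the $\GL_n(\F_q)$-orbits on pairs of such decompositions, equivalently the $\GL_\lambda(\F_q)$-orbits on type-$\mu$ decompositions once a reference type-$\lambda$ decomposition $\F_q^n=U_1\oplus\cdots\oplus U_{\ell(\lambda)}$ is fixed. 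Thus $D_{\lambda,\mu}(q)$ is the number of $\F_q$-rational orbits on the moduli of two decompositions in relative position, whose quotient dimension is $2n^2-\sum_i\lambda_i^2-\sum_j\mu_j^2-n^2+s$, where $s$ is the dimension of the generic common stabiliser; in the diagonal case $\lambda=\mu=[1^n]$ this equals $(n-1)^2$, matching Theorem \ref{thm:1}.

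First I would prove polynomiality. I would stratify the orbits by a discrete \emph{relative-position invariant}, for instance the array of dimensions $\dim\bigl(U_i\cap(W_1\oplus\cdots\oplus W_j)\bigr)$, or more robustly the isomorphism type of the linear datum recording how the two gradings interact. For each fixed value of this invariant, the corresponding locus in $\GL_n/\GL_\mu$ should be a locally closed $\GL_\lambda$-stable subset on which the orbits are parametrised by the $\F_q$-points of an affine space (the off-diagonal $\operatorname{Hom}$-spaces that survive after normalising the block positions), contributing a term of the form $q^{d}$ times a number of combinatorial configurations; summing the finitely many strata yields a polynomial. An equivalent bookkeeping can be extracted straight from the explicit formula above: each factor $\ncl_\ctype$, $c_d(\lambda)$ and $|\GL_\lambda(\F_q)|$ is a polynomial in $q$ (the first by the Proposition computing $\ncl_\ctype$, via the necklace polynomials $I_d(q)$), and $a_{\ctype,\lambda}(q)$ is polynomial because, as in the proof of that theorem, it is assembled from class sizes in the factors $\GL_{\lambda_i}$ and from the combinatorics of how the supports of the $f_i$ overlap. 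The genuine content is then that the denominator $|\GL_\lambda(\F_q)|\cdot|\GL_\mu(\F_q)|$ divides the polynomial numerator.

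With polynomiality in hand I would treat monicity and positivity together by isolating the \emph{generic stratum}. There is a unique top-dimensional relative position (the two decompositions in general position, with all block intersections as small as the dimension count forces); its contribution is a single $\GL_n(\F_q)$-orbit whose point count gives exactly $q^{D}+(\text{lower order})$ with leading coefficient $1$, where $D$ is the moduli dimension above. Showing that every other stratum has strictly smaller degree then forces $D_{\lambda,\mu}$ to be monic of degree $D$. For positivity I would aim to write each stratum's contribution as $q^{d}\cdot P(q)$ with $P$ a product of Gaussian binomial coefficients $\binom{a}{b}_q$ and similar manifestly positive $q$-counts of decompositions with a prescribed intersection pattern; if the stratification can be arranged so that every stratum contributes a polynomial with nonnegative coefficients, positivity of the total follows, since no cancellation between strata is then needed.

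The hard part will be controlling the strata uniformly in $\lambda$ and $\mu$. Classifying pairs of direct-sum decompositions into many blocks is, in the worst case, a \emph{wild} problem (it contains the classification of representations of star-shaped quivers), so a priori the number of $\F_q$-orbits need not even be polynomial, let alone monic with nonnegative coefficients; the whole strategy rests on the empirical fact, verified up to $n=8$ and proved for the torus in Theorem \ref{thm:1}, that this wildness does not spoil the point count here. Concretely, the obstruction is to produce, for every relative-position type, an explicit count of its $\F_q$-orbits by a polynomial with nonnegative coefficients, and to prove that the top stratum alone supplies the monic leading term. Obtaining such counts cleanly enough to control all coefficients simultaneously --- rather than merely their alternating sum, which the explicit formula already delivers --- is precisely what keeps the statement a conjecture.
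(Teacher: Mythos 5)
You are attempting to prove a statement that the paper itself does not prove: it is stated as Conjecture \ref{cnj:1}, supported only by computer verification up to $n=8$. Note also that your appeal to Theorem \ref{thm:1} overstates what that theorem delivers: it gives an explicit summation formula for the case $\lambda=\mu=[1^n]$, but the paper only remarks that the result \emph{appears} to be monic of degree $(n-1)^2$ with positive integer coefficients; monicity and positivity are not proved even in that special case. Your text, as you yourself concede in the final paragraph, is a research programme rather than a proof, so it cannot be accepted as a solution; still, it is worth recording precisely where it breaks.

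Three gaps are decisive. First, your polynomiality step requires, for each relative-position stratum, that the set of $\GL_\lambda(\F_q)$-orbits be counted by a polynomial in $q$; this is exactly the wildness obstruction you name, and nothing in the proposal circumvents it. Moreover, identifying the number of orbits of $\F_q$-points with the number of $\F_q$-points of a quotient variety requires control of stabilizers (e.g.\ connectedness, via Lang's theorem), which you do not address. Second, your monicity argument is internally inconsistent: you assert that the generic stratum ``is a single $\GL_n(\F_q)$-orbit,'' yet your own dimension count gives an orbit space of dimension $D=(n-1)^2>0$ already for $\lambda=\mu=[1^n]$ and $n\ge 2$, and a dense orbit cannot exist when the orbit space has positive dimension. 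What you would actually need --- that the generic part of the orbit space is geometrically irreducible over $\F_q$ with monic point count, and that every remaining stratum contributes strictly smaller degree --- is nowhere established. Third, positivity demands a stratification in which no cancellation between strata occurs, i.e.\ every stratum's orbit count has nonnegative coefficients; you only express the hope that such a stratification exists. Even the much weaker claim that $|\GL_\lambda(\F_q)\backslash\GL_n(\F_q)/\GL_\mu(\F_q)|$ is given by a polynomial with \emph{integer} coefficients does not follow from soft arguments: from the paper's explicit formula one can deduce, with some work, that the count agrees with a fixed rational function of $q$ at all prime powers, hence with a polynomial with rational coefficients; but integrality, positivity and monicity of the coefficients all remain open, which is precisely why the paper leaves the statement as a conjecture.
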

For example, we have
\begin{equation}
	|\GL_{(3,2,1)}(\F_q)\backslash\GL_6(\F_q)/\GL_{(4,2)}(\F_q)|=q^4 + 7q^3 + 32q^2 + 89q + 117.
\end{equation}

Conjecture \ref{cnj:1} would imply that, if $\lambda,\mu$ and $\nu$ are partitions of $n$ with $\lambda$ and $\mu$ more refined than $\nu$, $|\GL_\lambda(\F_q)\backslash\GL_\nu(\F_q)/\GL_\mu(\F_q)|$ would also be a monic polynomial in $q$ with positive integer coefficients.

\section*{Acknowledgement}

This research was driven by computer exploration and computations using the open-source
mathematical software \texttt{Sage}~\cite{sage} and its algebraic
combinatorics features developed by the \texttt{Sage-Combinat}
community~\cite{Sage-Combinat}. I would like to thank Jean-Christophe Novelli and Wenjie Fang for their precious help and for proofreading this work.

\bibliography{Double-cosets}{}

\begin{thebibliography}{10}

\bibitem{ADP24}
Arvind Ayyer, Hiranya~Kishore Dey, and Digjoy Paul.
\newblock How large is the character degree sum compared to the character table
  sum for a finite group?, 2024.
\newblock arXiv:2406.06036 [math.RT].

\bibitem{BKPST17}
Sara~C. Billey, Matjaž Konvalinka, T.~Kyle Petersen, William Slofstra, and
  Bridget~E. Tenner.
\newblock Parabolic double cosets in {C}oxeter groups.
\newblock {\em The Electronic Journal of Combinatorics}, 25(1):P1.23, 2017.

\bibitem{DS22}
Persi Diaconis and Mackenzie Simper.
\newblock Statistical enumeration of groups by double cosets.
\newblock {\em Journal of Algebra}, 607A:214--246, 2022.

\bibitem{DF91}
David~S. Dummit and Richard~M. Foote.
\newblock {\em Abstract algebra}.
\newblock John Wiley \& Sons, Inc., Hoboken, NJ, third edition, 2004.

\bibitem{Fr41}
James~S. Frame.
\newblock The double cosets of a finite group.
\newblock {\em Bulletin of the American Mathematical Society}, 47(6):458--467,
  1941.

\bibitem{FK22}
Dmitry Fuchs and Alexandre Kirillov.
\newblock Jordan types of triangular matrices over a finite field.
\newblock {\em Arnold Mathematical Journal}, (8):543--559, 2022.

\bibitem{BGR14}
Joseph~Ben Geloun and Sanjaye Ramgoolam.
\newblock Counting tensor model observables and branched covers of the
  2-sphere.
\newblock {\em Annales de l’Institut Henri Poincaré D}, 1:77--138, 2014.

\bibitem{GJ96}
Ian~P. Goulden and David~M. Jackson.
\newblock Maps in locally orientable surfaces, the double coset algebra, and
  zonal polynomials.
\newblock {\em Canadian Journal of Mathematics}, 48:569--584, 1996.

\bibitem{Is76}
Irving~Martin Isaacs.
\newblock {\em Character theory of finite groups}.
\newblock Academic Press, 1976.

\bibitem{Ku81}
Joseph P.~S. Kung.
\newblock The cycle structure of a linear transformation over a finite field.
\newblock {\em Linear Algebra Appl.}, 36:141--155, 1981.

\bibitem{LYWL20}
Qing-{B}in Luo, Guo-{W}u Yang, Jin-{Z}hao Wu, and Chen Lin.
\newblock Computing the number of the equivalence classes for reversible logic
  functions.
\newblock {\em International Journal of Theoretical Physics}, 59(8):2384--2396,
  2020.

\bibitem{Mac98}
Ian~G. Macdonald.
\newblock {\em Symmetric functions and Hall polynomials}.
\newblock Oxford University Press, 1998.

\bibitem{Mor06}
Kent~E. Morrison.
\newblock Integer sequences and matrices over finite fields.
\newblock {\em Journal of Integer Sequences}, 9:06.2.1, 2006.

\bibitem{oeis}
{OEIS Foundation Inc.}
\newblock The {O}n-{L}ine {E}ncyclopedia of {I}nteger {S}equences.
\newblock Published electronically at \url{http://oeis.org}.

\bibitem{Sage-Combinat}
The {S}age-{C}ombinat community.
\newblock {S}age-{C}ombinat: enhancing {S}age as a toolbox for computer
  exploration in algebraic combinatorics, 2024.
\newblock {\tt https://wiki.sagemath.org/combinat}.

\bibitem{St99}
Richard~P. Stanley and Sergey Fomin.
\newblock {\em Enumerative Combinatorics}, volume~2.
\newblock Cambridge University Press, 1999.

\bibitem{sage}
{The Sage Developers}.
\newblock {S}agemath, the {S}age {M}athematics {S}oftware {S}ystem ({V}ersion
  10.2), 2023.
\newblock {\tt https://www.sagemath.org}.

\end{thebibliography}
\bibliographystyle{plain}

\appendix
\section{}
The number of permutations of the 120-cell up to isometry is equal to\bigskip \\ 
\numprint{61032615558710973309310755426074601165467804119765918056461366022994506284468321941636780457270308038026661519087867596466246642468001395788942060079351234115949978992216861728996076814875692555826408570703950518132165510502598056129619452178429720022875014767262701506915134219969051637826831230175793194787741127385696057817866732554958440915704044247611361518355591978137170814452800960932948683266322770696000132930228740441834367046689461996370406273766426655627766297455414369782083660074361285376742747916471638023533627762620149291669671007220148012695775108701907778035499985460101875335446575668682003476769905757221613059456715124340370004644182419983461938585465709868051843528334865637783432243121141960741183660207253017507840503040137711998706335064580757666737046852760910083551569252122552689413223727032172033758922640655766919875357841645036770531500880515841784135899217450663607891212692876567522590801769697274605349363327350083296511696186865110527484607692711747839048857297143546107068694772796016546186165294713042062143999176884766151223769562343734770798283492672570578271323473514160254230497528088069862319380170640527559784850142194717903077380779552123024550089212802940620650729369703095366866340083842506717784585147853774660975456505047881670007494440012986906121159657431697110316167904065756421094750931686492437056082335336691090542820831077232279552000000000000},\bigskip \\
and the number of self-inverse double cosets is equal to\bigskip \\
\numprint{24134845725866630954648590857135382227340896103570835783490495725848792835299736233031918275420860735380976884874284977933704599570161360906417571942052364586749058508591849043730961252583719618627263189940395063008941322925129995638676303960471120521516312934516546838948169324513923613655921426899202601284705651755393052120843874941795569216757127903621738181229684543309050648396242422250464194464857968143458918823148134173981777643472231778460265343706513960807179556819204471401473349697160766793815864733360814253588040339774426811400832164271876530449280938920797207500979891132990666348857931349519684765495133663024044616026202016095261253578346001171724932645883697778587541917950935343200736065536}.
\end{document}